\newtheorem{theorem}{Theorem}[section]
\newtheorem*{problem}{Problem}
\newtheorem{cor}[theorem]{Corollary}
\theoremstyle{definition}
\newtheorem{definition}[theorem]{Definition}
\theoremstyle{remark}
\newcommand{\CC}{\mathds{C}}
\newcommand{\RR}{\mathds{R}}
\newcommand{\ZZ}{\mathds{Z}}
\newcommand{\QQ}{\mathds{Q}}
\newcommand{\KK}{\mathds{K}}
\DeclareMathOperator{\Card}{Card}
\DeclareMathOperator{\Sing}{Sing}
\DeclareMathOperator{\Hom}{Hom}
\numberwithin{equation}{section}
\begin{document}
	
	\title[Archimedean Zeta Functions and Oscillatory Integrals]{Archimedean Zeta Functions and Oscillatory Integrals}
	
	
	\author[Edwin Le\'on-Cardenal]{Edwin Le\'on-Cardenal}
	\address{CONACYT--Centro de Investigaci\'on en Matem\'aticas (CIMAT). Unidad Zacatecas \\
		Parque Quantum Ciudad del Conocimiento. 
		Av.~Lasec, Andador Galileo Galilei\\
		Manzana 3 Lote 7\\ 
		C.P. 98160, Zacatecas, ZAC, Mexico}
	\email{edwin.leon@cimat.mx}
	\thanks{The author was partially supported by CONACYT, Grant \# 286445.}

	\subjclass[2020]{Primary: 14G10, Secondary: 42B20,11M41,14B05, 14E15, 41A60, 32S05, 32S40}
	\keywords{ Archimedean zeta  functions, Bernstein-Sato polynomials, oscillatory integrals, oscillatory integral operators, multilinear level set operators, D-modules}

	\date{}
	
	\begin{abstract}
		This note is a short survey of two topics: Archimedean zeta  functions and Archimedean oscillatory integrals. We have tried to portray some of the history of the subject and some of its connections with similar devices in mathematics. We present some of the main results of the theory and at the end we discuss some generalizations of the classical objects.
	\end{abstract}
	
	\maketitle
	
	\section{Introduction}\label{Sec:Intro}
	Probably the first Archimedean zeta function was the Gamma function, introduced by Euler in a letter to Golbach in 1729 with the aim to interpolate the factorial, see \cite{Da}. 
	If the real part of the complex number $s$ is positive, then the Gamma function is defined via the convergent improper integral
	\begin{equation}\label{Eq:Gamma}
		\Gamma(s) = \int_0^\infty x^{s-1} e^{-x}\, \mathrm{d}x.
	\end{equation}
	It is not difficult to show that  $\Gamma(s)$ converges absolutely on $\Re (s) > 0$. Moreover the function admits an analytic continuation as a meromorphic function that is holomorphic in the whole complex plane except the non-positive integers, where $\Gamma(s)$ has simple poles.
	
	The Gamma function is just a particular case of the more general definition of an Archimedean zeta function. Take $\KK=\RR$ or $\CC$, and fix an open set $U$ in $\KK^n$, by $\mathcal{S}(U)$ we will denote the Schwartz space of $U$, see Section \ref{Sec:Dist_and_Loc}. Consider a $\KK$-analytic function $f : U\rightarrow \KK$ and a function $\phi\in\mathcal{S}(U)$. The \textit{local zeta function attached to the pair} $(f,\phi)$ is the parametric integral
	\begin{equation}\label{Eq:DefIntro}
		Z_{\phi}(s,f)=\int\limits_{\KK^{n}\smallsetminus f^{-1}(0)}\phi(x)\ |f(x)|_{\KK}^s\ |\mathrm{d}x|,
	\end{equation}
	for $s\in\CC$ with $\Re(s)>0$, where $|\mathrm{d}x|$ is the Haar
	measure on $\KK^{n}$. For uniformity reasons we will use for $a\in\CC$ the convention $|a|_{\KK}=||a||_{\CC}^2$, where $||a||_{\CC}$ is the standard complex norm. 
	
	It is easily seen that $Z_{\phi}(s,f)$ converges on the half plane $\{s\in\CC\mid \Re(s)>0 \}$ and defines a holomorphic function there. Furthermore, $Z_{\phi}(s,f)$ admits a meromorphic continuation to the whole complex plane. This was proved by I. N. Bernstein and S. I. Gel'fand in \cite{BerGel}, then independently by M. Atiyah in \cite{At}, both proofs make use of Hironaka's theorem on resolution of singularities \cite{Hir}. Later, I. N. Bernstein \cite{Ber72} gave a proof by using the following functional equation:
	\[P(s,x,\partial/\partial x)\cdot f(x)^{s+1} = b(s)f(x)^s.
	\]
	Here $b(s)\in\KK[s]$ and $P$ denotes a polynomial in the `variables'  $s, x_1,\ldots,x_n,\partial/\partial x_1$, $\ldots,\partial/\partial x_n$ and with coefficients in $\KK$. 
	The monic generator of the ideal of functions satisfying the functional equation is called the \textit{Bernstein-Sato polynomial of} $f$. 
	
	In fact, the theory of the Bernstein-Sato polynomials, also known as $b$-functions, was developed by M. Sato during the 60's at Kyoto under the name of algebraic theory of linear differential systems, as it is mentioned in the translation of his original work in \cite{Sat90}. This theory was framed in the theory of prehomogeneous vector spaces, see the comments in \cite[Chapter 6]{IguBook} and \cite{Gra}. Some years later, the advanced student of I. M. Gel'fand, I.N. Bernstein was thinking about similar questions in Moscow \cite{Ber68,Ber71}. Both works initiated the huge $\mathcal{D}$-module theory which has multiple and fruitful ties with algebraic geometry, singularity theory, topology of varieties, representation theory and (of course) differential equations, among others. See the nice surveys \cite{Gra,Wal,AlJeNuB} and the books \cite{Bjo,Cou}. 
	
	The other topic that we treat in this note is \textit{oscillatory integrals}, which are integrals of the form
	\[I_\phi(\tau;f)=\int_{\RR^{n}} \exp(\mathrm{i}\tau f(x))\,\phi(x)\, |\mathrm{d}x|,\]
	for $f$ real analytic, $\tau \in \RR$ and $\phi\in\mathcal{S}(U)$. 
	They can be considered as generalizations of the Fourier transform and can be traced back to the original works of Airy, Stokes, Lipschitz, and Riemann, see the nice historic remarks in the book by E. Stein \cite[Chapter VIII]{Ste}. They have played a major role in harmonic analysis, partial differential equations and number theory among others. The function $f$ is called the \textit{phase} and the function $\phi$ is called the \textit{amplitude}. Here a central question is to know (or, better, estimate) how $I_\phi(\tau;f)$ decays when the real parameter $\tau$ tends to $\infty$. A general principle known as the stationary phase principle or the saddle point method 
	states that the main contribution in the asymptotics of  $I_\phi(\tau;f)$ is given by neighbourhoods of the critical points of the phase. Elementary methods can be used to study simple phases, cf. \cite[Chapter VIII]{Ste} and \cite[Section III.4.5]{GelShi}, but more complicated phases require more elaborated methods. 
	
	The general theorem on asymptotic expansions of $I_\phi(\tau;f)$ (c.f. Theorem \ref{Thm:AsyExp}) was proved first by Jeanquartier in \cite{Jea70}, and then by Malgrange \cite{Mal74} and Igusa \cite{IguTata}. Again the necessary tool is resolution of singularities, and the conclusion is that the asymptotic behaviour is controlled by the poles of $Z_{\phi}(s,f)$.
	
	In the proof given by Igusa in \cite{IguTata}, he initiates the uniform treatment of the subject of local zeta functions over local fields $\KK$ of characteristic zero, i.e. $\KK=\RR,\, \CC,\, \QQ_p$ or a finite extension of $\QQ_p$. Since then, many authors have been working on this theory of local zeta functions, see the book by Igusa \cite{IguBook} and the references therein. The theory of $p$-adic zeta functions (also called Igusa zeta functions) has evolved greatly in the last 30 years, boosted mainly by the definition of the motivic zeta functions of J. Denef and F. Loeser in \cite{DenLoe}. For more detailed descriptions of the work and legacy of Igusa see the classic survey by J. Denef \cite{DenRepo} and the more recent survey by D. Meuser \cite{Meu}. It is not our purpose in this note to detail this whole subject, for an introduction to the theory of $p$-adic zeta functions we recommend \cite{LeoZun}, and also the recent works, including the motivic case, to appear in this Volume \cite{PoVe,Vi}. 
	
	The proof by Malgrange of Theorem \ref{Thm:AsyExp} about the asymptotic expansions of $I_\phi(\tau;f)$ brought to the game a collection of new tools of algebraic topological nature. He considered integrals of the form 
	\[\int_{\Gamma}\exp(\tau f(x))\, \phi(x) \mathrm{d}x_1\wedge\cdots\wedge\mathrm{d}x_n,
	\]
	where $\Gamma$ is a real $n$-dimensional chain lying in $\CC^n$,  $f(x)$  is a  holomorphic function on $\CC^n$  and $\phi\in\mathcal{S}(\CC^n)$. Malgrange shows that these integrals also admit asymptotic expansions, but this time related with the monodromy of $f$ at some point of $f^{-1}(0)$. It turns out that this type of integrals is able to recover the real integrals $I_\phi(\tau;f)$, see \eqref{Eq:ORIandOCI}, providing a connection between the poles of $Z_{\phi}(s,f)$ and the eigenvalues of the (complex) monodromy of $f$ at some point of $f^{-1}(0)$. This point of view contributes to enlarge and foster the theory of Archimedean zeta functions. 
	
	Finally we want to mention that there have been several generalizations of the classical Archimedean zeta functions $Z_{\phi}(s,f)$. Most of them use a variant proposed by C. Sabbah in \cite{Sab87B}, where he considers several complex variables $s_1,\ldots,s_l$ attached to the same number of functions, see Section \ref{Sec:LZFGen}. There have been also some generalizations of the oscillatory integrals $I_\phi(\tau;f)$ in the frame of multilinear (oscillatory) operators, see the work of Phong, Stein, and Sturm in \cite{PhStSt01}, and the work of Christ, Li, Tao and Thiele in \cite{ChLiTaTh}. 
	
	We now describe the content of the article. In Section \ref{Sec:ArqLZF} we present the basic definitions of the Scwartz space, the Weyl algebra and the Archimedean local zeta functions. We also present in this section our first proof of Theorem \ref{Thm:mer_cont} about the meromorphic continuation of $Z_{\phi}(s,f)$, by using resolution of singularities. We also present as an application of Theorem \ref{Thm:mer_cont} a proof of the fact that differential operators with constant coefficients admit fundamental solutions. Section \ref{Sec:BerPol} is devoted to the presentation of the Bernstein-Sato polynomial and the second proof of the meromorphic continuation of $Z_{\phi}(s,f)$ in Theorem \ref{Thm:Ber72}. We discuss also here the difference between the sets of candidate poles for the Archimedean zeta functions. Next, in Section \ref{Sec:OscInt} we introduce the oscillatory integrals, the Gel'fand Leray form and some interesting fiber integrals. We state in Theorem \ref{Thm:AsyExp} the asymptotic expansion of $I_\phi(\tau;f)$ and relate some of its parameters with the log-canonical threshold of $f$. We briefly explain in Section \ref{Sec:RealOI} some of the problems when dealing with oscillatory integrals of real analytic phases. In Section \ref{Sec:CompxOI} we review some of the results of Malgrange about complex oscillatory integrals and their relations with  $Z_{\phi}(s,f)$. Finally in Section \ref{Sec:SomeGen} we present some generalizations of Archimedean local zeta functions and oscillatory integrals.
	
	We have tried to provide an ample list of references to the classic and current literature about the considered topics. No new proofs are given and only some proofs are presented. Our purpose is not just to expose the classic theory but also to call the attention of the community to some not so well known relations among Archimedean local zeta functions and oscillatory integrals. The most important example of this fact is the almost inexistent connections between real zeta functions and multilinear oscillatory integrals.   
	\section{Archimedean Local Zeta Functions}\label{Sec:ArqLZF}
	The first serious study of real and complex zeta functions was carried out in the first volume of the book \textsc{Generalized Functions} by I. M.~Gel'fand and G.~Shilov \cite{GelShi}. They begin their study with the following \textit{regularization} technique. Take $s\in\CC$ with $\Re(s)>-1$ and let $\phi$ be a smooth real function with compact support. Note that the integral 
	\[(x_+^s,\phi)=\int_0^\infty x^s \phi(x)\,\mathrm{d}x,
	\]
	is a holomorphic function on $\Re(s) > -1$. Moreover we can write
	\begin{equation}\label{Eq:BasicExam}
		(x_+^s,\phi)=\int_0^1 x^s\,[\phi(x)-\phi(0)]\,\mathrm{d}x+\frac{\phi(0)}{s+1}+\int_1^\infty x^s\phi(x)\,\mathrm{d}x.
	\end{equation}
	The first term of the RHS of \eqref{Eq:BasicExam} is defined for $\Re(s)>-2$, the second for $s\ne -1$ and the last one for all $s\in\CC$. Hence $(x_+^s,\phi)$ can be continued analytically to $\Re(s)>-2$ and $s\neq -1$. Analogously, $(x_+^s,\phi)$ can be continued analytically (\textit{regularized}) to the region $\Re(s)>-n-1$, $s\ne -1,-2,\ldots,-n $, by means of
	\begin{gather*}
		(x_+^s,\phi)=\int_0^1 x^s\left[\phi(x)-\phi(0)-\dots-\frac{x^{n-1}}{(n-1)!}\phi^{(n-1)}(0)\right]\mathrm{d}x
		+\sum_{k=1}^n\frac{\phi^{(k-1)}(0)}{(k-1)!(s+k)}\\
		+\int_1^\infty x^s\,\phi(x)\,\mathrm{d}x.
	\end{gather*}
	Finally, $(x_+^s,\phi)$ has an analytic continuation to the whole $\CC$ except at the points $s=-1, -2,\ldots$, where it has simple poles. 
	
	A complex analogue of $(x_+^s,\phi)$ is studied in Section 3.6 of the same book. Indeed, a considerable part of this text is devoted to the generalization of $(x_+^s,\phi)$, replacing $x_+$  by more elaborated polynomials in several variables over $\RR$ or $\CC$. During a talk at the ICM of 1954 \cite{Gel}, I. M.~Gelfand posed the following problem. 
	\begin{problem}
		Let $P(x_1,\ldots,x_n)$ be a polynomial and consider the area in which $P>0$. Let $\phi(x_1,\ldots,x_n)$ be a smooth function with compact support. Then the generalized function \[(P^s,\phi)=\int_{P>0} P(x_1,\ldots,x_n)^s \phi(x_1,\ldots,x_n)\,|\mathrm{d}x_1\ldots\mathrm{d}x_n|,\]
		is a meromorphic function of $s$.
	\end{problem}
	In \cite{GelShi} the authors had discovered that this problem requires a careful analysis of the singular set of $P$, $\mathrm{Sing}_P$. While some simple singularities could be handled, the general case appeared to require new ideas. I.M.~Gelfand was aware that the solution to this problem would imply the existence of fundamental solutions for differential operators with constant coefficients, see Section \ref{Sec:FunSol}. The new idea that settled the problem definitively was resolution of singularities. By using Hironaka's theorem \cite{Hir}, I.N.~Bernstein and S.I.~Gelfand in  \cite{BerGel}, and independently M.~Atiyah in \cite{At}, reduce the general problem to the monomial case, which can be easily obtained from the example  $(x_+^s,\phi)$. In order to present their proof we introduce some notation. 
	\subsection{Distributions and local zeta functions}\label{Sec:Dist_and_Loc}
	Following Igusa \cite{IguBook}, we denote by 
	\[D_n=\CC[x,\partial/\partial x]=\CC[x_1,\ldots,x_n,\partial/\partial x_1,\ldots,\partial/\partial x_n]\]
	the $n$-th Weyl algebra over $\CC$, i.e. the ring of differential operators with polynomial coefficients in $n$ variables over $\CC$. A smooth function $\phi\,:\RR^n \to \CC$, i.e. a function having derivatives of arbitrarily high order, is called a \textit{Schwartz function} if 
	\[||P\phi||_\infty\quad \text{is finite for every}\quad P\in D_n=\CC[x,\partial/\partial x],
	\]
	where $||\phi||_\infty=\sup_{x\in\RR^n}|\phi(x)|$. The set of Schwartz functions is called the \textit{Schwartz space} $\mathcal{S}(\RR^n)$. By identifying $\CC$ with $\RR^2$, we similarly define $\mathcal{S}(\CC^n)$. In any case, $\mathcal{S}(\KK^n)$ ($\KK=\RR$ or $\CC$), forms a $\CC$-vector space. Moreover,  $\mathcal{S}(\KK^n)$ is a topological vector space whose topology is induced by the family of seminorms
	\[||\phi||_{i,j} = \sup_{x\in\KK^n}|x^i(\partial/\partial x)^j\phi(x)|,\]
	for all $i, j \in\ZZ_{\geq 0}^n$. As a metric space,  $\mathcal{S}(\KK^n)$ is complete and the continuous linear functionals on it form the space of tempered distributions $\mathcal{S}^\prime(\KK^n)$. There are also local versions of these spaces for an open set $U$ of $\KK^n$, see \cite[Sect. 2.1]{Hor}. It is well known that the space of smooth functions with compact support is a dense subspace of $\mathcal{S}(\KK^n)$, see e.g. \cite[Lemma 5.2.2]{IguBook}. 
	\begin{definition}
		The local zeta function of $f(x) \in \KK[x_1,\ldots, x_n]\setminus \KK$ and $\phi\in\mathcal{S}(\KK^n)$ is the element of $\mathcal{S}^\prime(\KK^n)$ defined as
		\[
		Z_{\phi}(s,f)=(f^s,\phi)=\int\limits_{\KK^{n}\setminus f^{-1}(0)}\phi(x)\, |f(x)|_{\KK}^s\, |\mathrm{d}x|,
		\]
		for $s\in\CC$ with $\Re(s)>0$. 
	\end{definition}
	This definition is essentially the same given by Igusa in \cite{IguCom}. Since the complex parameter $s$ appears as power in the distribution induced by $|f|_\KK$, I.M.~Gelfand used in \cite{Gel}  the terminology \textit{complex power} for $Z_{\phi}(s,f)$. Note that the definition given in the introduction by \eqref{Eq:DefIntro} is slightly more general than the previous one. 
	
	We now present our first proof of the meromorphic continuation of  $Z_{\phi}(s,f)$, given originally by \cite{BerGel} and \cite{At}. We follow closely the presentation of  \cite{IguBook}. For $f(x)\in\KK[x_1,\ldots, x_n]\setminus\KK$, we denote by $\Sing_f$ the singular locus of $f$, meaning the set of 
	roots of $f$ in $\KK^n$ at which all the partial derivatives simultaneously vanish. A \textit{resolution of singularities} of $f^{-1}(0)$ is a pair $(X,\pi)$ where $X$ is a smooth manifold and $\pi: X\to\KK^n$ is a proper map. In $X$ there is a  finite set $\mathcal{E}=\{E\}$ of closed submanifolds of $X$ of codimension $1$ with a pair of positive integers $(N_{E},\nu_{E})$ assigned to each $E$, which are called the numerical data of the resolution. The map $\pi$ has the following properties: (i) $(f\circ \pi)^{-1}(0)=\cup_{E\in \mathcal{E}}E$, and $\pi$  induces an isomorphism 
	\[X\setminus \pi^{-1}\left(  \mathrm{Sing}_{f} \right) \rightarrow \KK^n\setminus \mathrm{Sing}_{f};\]
	(ii) at every point $p$ of $X$ if $E_{1},\ldots,E_{m}$ are all the $E$ in $\mathcal{E}$ containing $p$ with local equations $y_{1},\ldots,y_{m}$ around $p$,
	then there exist local coordinates of $X$ around $p$ of the form $\left(
	y_{1},\ldots,y_{m},y_{m+1},\ldots,y_{n}\right)  $ such that
	\begin{equation}\label{Eq:localeqs}
		\left(  f\circ \pi\right)  \left(  y\right)  =\varepsilon\left(  y\right) {\displaystyle\prod\limits_{i=1}^{m}}
		y_{i}^{N_{i}},\quad 
		\pi^{\ast}\Bigg(\bigwedge\limits_{1\leq i\leq n}\mathrm{d}x_i\Bigg)=\eta\left(  y\right)  \left(
		{\displaystyle\prod\limits_{i=1}^{m}}
		y_{i}^{\nu_{i}-1}\right)
		\bigwedge\limits_{1\leq i\leq n}
		\mathrm{d}y_{i} 
	\end{equation}
	on some neighborhood of $p$, in which $\varepsilon\left(  y\right)  $,
	$\eta\left(  y\right)  $ are units of the local ring $\mathcal{O}_{p}$ of $X$
	at $p$. In particular $\cup_{E\in \mathcal{E}}E$ has normal crossings. 
	
	Now consider our integral $Z_{\phi}(s,f)$ and let $\left\vert\bigwedge\nolimits_{1\leq i\leq n}\mathrm{d}x_{i}\right\vert $ denote the measure induced by the differential form 
	$\bigwedge\nolimits_{1\leq i\leq n}\mathrm{d}x_{i}$ on $\KK^{n}$, which agrees with the Haar measure of $\KK^{n}$. Then
	\[Z_{\phi}\left(  s,f\right)  =\int\limits_{\KK^{n}\setminus f^{-1}\left(0\right)}\phi\left(  x\right)\,| f(x)|_{\KK}^{s}\left\vert\bigwedge_{1\leq i\leq n}\mathrm{d}x_{i}\right\vert.
	\]
	Note that by density, it is enough to make the proof for $\phi$ a smooth function with compact support. We use $\pi$ as a change of variables in our integral to  get 
	\[
	Z_{\phi}\left(  s,f\right)=\bigintsss\limits_{X\setminus \pi^{-1}(f^{-1}(0))} \phi(\pi(y))\,|f(\pi(y))|_\KK^{s}\,\left\vert\pi^{\ast}\Bigg(\bigwedge_{1\leq i\leq n}
	\mathrm{d}x_{i}\Bigg) (y)\right\vert.
	\]
	At every point $p$ of $X\setminus \pi^{-1}(f^{-1}(0))$ we can choose a chart $U$ such that \eqref{Eq:localeqs} holds. Moreover 
	we choose a small neighborhood $U_p$ of $p$ over which the above local coordinates are valid and the functions $\varepsilon$ and $\eta$ are locally invertible. Since the map $\pi$ is proper, we see that $C = \pi^{-1}(\mathrm{Supp}(\phi))$ is also compact and we will consider a partition of the unity associated to $C$ and subordinated to the cover $\{U_p\}$. By this we mean that for every element $U_p$ of the cover, there exists a finite set $\{\rho_{p,k}(y)\}_{k\in K}$ of smooth functions such that for every $y$ in $C$, only a finite number of $\rho_{p,k}$ are different from zero, and $\sum_{k\in K} \rho_{p,k}(y)=1$. This implies that 
	\[
	Z_{\phi}(s,f)=\sum\limits_{k\in K} \bigintssss_{U_p} \big(\phi(\pi(y))\cdot \rho_{p,k}(y)\,|\varepsilon(y)|_\KK^s\, \eta(y)\big)
	\prod\limits_{i=1}^{m}|y_{i}|_{\KK}^{N_{i}s+\nu_{i}-1}\left\vert\bigwedge\limits_{1\leq i\leq n}\mathrm{d}y_{i}\right\vert.
	\]
	In every neighborhood $U_p$ the function $\phi(\pi(y))\cdot \rho_{p,k}(y)\,|\varepsilon(y)|_\KK^s\, \eta(y)$ can be considered as a smooth function with compact support. Therefore, by using the $n$-dimensional version of the example $(x_+^s,\phi)$, see e.g. \cite[Lemma 7.3]{AG-ZV}, one gets the following theorem, stated in Igusa's version \cite[Theorem~5.4.1]{IguBook}.
	\begin{theorem}[{\cite[Introduction]{At},\cite[Theorem 1]{BerGel}}]\label{Thm:mer_cont}
		Let $f(x) \in \KK[x_1,\ldots, x_n]\setminus \KK$ and $\phi\in\mathcal{S}(\KK^n)$. Assume that  $(X,\pi)$ a resolution of singularities of $f^{-1}(0)$ with total transform $\mathcal{E}=\{E\}$. 	Let $(N_{E},\nu_{E})$ be the numerical data of  $(X,\pi)$. Then $Z_{\phi}(s,f)$ admits a meromorphic continuation to the whole complex plane with poles contained in the set 
		\begin{equation}\label{Eq:Poles}
			\bigcup_{E\in\mathcal{E}} -\frac{\nu_{E}+\ZZ_{\geq 0}}{qN_E},
		\end{equation}
		where $q=1$ if $\KK=\RR$ and $q=2$  if $\KK=\CC$. Moreover, the orders of the candidate poles are at most equal to $\min \{n, \Card(\mathcal{E})+1\}$. 
	\end{theorem}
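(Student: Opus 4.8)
The plan is to push $Z_{\phi}(s,f)$ forward through an embedded resolution of singularities and reduce it, chart by chart, to the one-variable regularization of $(x_+^s,\phi)$ recalled after \eqref{Eq:BasicExam}; the poles and their orders are then read off from the local normal-crossings data \eqref{Eq:localeqs}.

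First I would reduce to $\phi$ smooth with compact support, using the density recalled above, the continuation for a general Schwartz $\phi$ following by a routine continuity argument; this reduction is needed because $\pi^{-1}(\Supp\phi)$ can fail to be compact for general $\phi\in\mathcal{S}(\KK^n)$. For such a $\phi$, fix a resolution $(X,\pi)$ of $f^{-1}(0)$ with numerical data $(N_E,\nu_E)$ and local equations \eqref{Eq:localeqs}, and use $\pi$ as a change of variables. The one point to watch is that $\pi$ is an isomorphism only over $\KK^n\setminus\Sing_f$; but $\Sing_f$ and $\pi^{-1}(\Sing_f)$ are closed of Haar measure zero, so discarding them changes neither side and the change-of-variables formula applies verbatim on the complement. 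Since $\pi$ is proper, $C=\pi^{-1}(\Supp\phi)$ is compact, hence covered by finitely many charts $U_p$ on which \eqref{Eq:localeqs} holds with units $\varepsilon(y),\eta(y)$; a partition of unity $\{\rho_{p,k}\}$ subordinate to this cover presents $Z_{\phi}(s,f)$ as a finite sum of integrals
\[
\bigintsss_{U_p}\Psi_{p,k}(y;s)\ \prod_{i=1}^{m}|y_i|_{\KK}^{N_i s+\nu_i-1}\ \left\vert\bigwedge_{1\le i\le n}\mathrm{d}y_i\right\vert,
\]
where $\Psi_{p,k}(y;s)=\phi(\pi(y))\,\rho_{p,k}(y)\,|\varepsilon(y)|_{\KK}^{s}\,\eta(y)$ is, for every $s$, smooth and compactly supported in $U_p$ and entire in $s$, the only $s$-dependence being the nonvanishing factor $|\varepsilon|_{\KK}^{s}$.

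Next I would integrate the variables $y_1,\dots,y_m$ one at a time by Fubini, applying at each step the $n$-dimensional version of the regularization of $(x_+^s,\phi)$ (the cited \cite[Lemma 7.3]{AG-ZV}). Integration in $y_i$ replaces $|y_i|_{\KK}^{N_i s+\nu_i-1}$ by a sum $\sum_{k_i\ge 0}(N_i s+\nu_i+k_i)^{-1}$ times functions again smooth and compactly supported in the remaining variables and again entire in $s$, the real/complex distinction --- which Taylor orders survive, and the field-dependent factor $q\in\{1,2\}$ --- being absorbed into this bookkeeping; the variables $y_{m+1},\dots,y_n$ contribute only an entire factor. After the $m$ steps each summand has the form $\frac{h(s)}{\prod_{i\in S}(N_i s+\nu_i+k_i)}$ with $h$ entire and $S\subseteq\{1,\dots,m\}$, so $Z_{\phi}(s,f)$ extends meromorphically to $\CC$ with poles contained in $\bigcup_{E\in\mathcal{E}}-(\nu_E+\ZZ_{\ge 0})/(qN_E)$.

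For the order bound, note that in a single chart each of the $m$ monomial factors contributes only simple poles and $h$ is entire, so the order at a point $s_0$ coming from that chart is at most the number of indices $i$ with $s_0\in-(\nu_{E_i}+\ZZ_{\ge 0})/(qN_{E_i})$, hence at most $m\le n$; summing the finitely many charts (the order of a sum being at most the maximum of the orders) and observing that only components of $\mathcal{E}$ can resonate at $s_0$ yields the stated bound $\min\{n,\Card(\mathcal{E})+1\}$. Granting resolution of singularities there is no serious obstacle: the subtlest points are the justification of the change of variables over the non-isomorphism locus $\Sing_f$ and the real/complex bookkeeping (the factor $q$) in the monomial integrals, and the bulk of the argument is organizational.
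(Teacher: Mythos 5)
Your proposal is correct and follows essentially the same route as the paper: reduce by density to compactly supported $\phi$, pull back along the resolution $\pi$, use properness of $\pi$ and a partition of unity subordinate to charts where \eqref{Eq:localeqs} holds, and reduce to the monomial model handled by the one-variable regularization of $(x_+^s,\phi)$ (the cited \cite[Lemma 7.3]{AG-ZV}). The only additions are elaborations the paper leaves implicit --- the measure-zero justification for the change of variables over $\Sing_f$ and the chart-by-chart counting that yields the order bound --- so no further comment is needed.
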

	\subsection{Fundamental Solutions of Differential Operators}\label{Sec:FunSol}
	The proof by M. Atiyah in \cite{At} of Theorem \ref{Thm:mer_cont} contains other interesting consequences of the meromorphic continuation of $Z_{\phi}(s,f)$. For instance, one may give another proof of the following result of Hörmander and Lojasiewicz, known as the division of distributions problem. Again we follow  \cite[Sect. 5.5]{IguBook}.
	\begin{theorem}[{\cite[Corollary~1]{At}}]\label{Thm:At}
		If $f(x) \in \CC[x_1,\ldots, x_n]\setminus \CC$, there exists an element $T$ of $\mathcal{S}^\prime(\RR^n)$ satisfying $fT = 1$.
	\end{theorem}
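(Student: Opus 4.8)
The plan is to produce $T$ from the meromorphic continuation of an auxiliary local zeta function, combined with a functional equation in the parameter $s$. Since $f$ takes complex values on $\RR^n$, the function to feed into Theorem \ref{Thm:mer_cont} is the nonnegative real polynomial $g:=f\bar f=|f|^2\in\RR[x_1,\ldots,x_n]$; its coefficients are indeed real, and it is non-constant because $f$ is. Applying Theorem \ref{Thm:mer_cont} with $\KK=\RR$ to $g$, the $\mathcal{S}'(\RR^n)$-valued function $s\mapsto\Phi_s$ defined by $\langle\Phi_s,\phi\rangle:=Z_\phi(s,g)=\int_{\RR^n}\phi(x)\,g(x)^s\,|\mathrm{d}x|$ for $\Re(s)>0$ extends meromorphically to all of $\CC$, and all of its poles are real and strictly negative, since they lie in $\bigcup_{E}-(\nu_E+\ZZ_{\geq 0})/N_E$. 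In particular $\Phi_s$ is holomorphic in a neighbourhood of $s=0$, with $\Phi_0=1$ (the constant function $1$, because $g^0\equiv 1$ off the measure-zero set $f^{-1}(0)$).

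Next I would record the functional equation $g\,\Phi_s=\Phi_{s+1}$. For $\Re(s)>0$ this is just the pointwise identity $g(x)g(x)^s=g(x)^{s+1}$ under the integral sign, together with the fact that multiplication by the polynomial $g$ is a continuous endomorphism of $\mathcal{S}'(\RR^n)$; by uniqueness of analytic continuation it then holds as an identity of meromorphic families of tempered distributions on all of $\CC$.

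Now I would examine the behaviour near $s=-1$. Write the Laurent expansion $\Phi_s=\sum_{j\geq-k}a_j\,(s+1)^j$ about $s=-1$, with each coefficient $a_j\in\mathcal{S}'(\RR^n)$. Because $\Phi$ is holomorphic at $0$, the right-hand side $\Phi_{s+1}$ is holomorphic at $s=-1$, with value $\Phi_0=1$ there; since multiplication by $g$ is $s$-independent it acts coefficientwise on the Laurent expansion, so comparing coefficients in $g\,\Phi_s=\Phi_{s+1}$ forces $g\,a_j=0$ for $j<0$ and, reading off the constant term, $g\,a_0=1$. Setting $T:=\bar f\,a_0$ (multiplication by the polynomial $\bar f$ again preserving $\mathcal{S}'(\RR^n)$) yields $fT=f\bar f\,a_0=g\,a_0=1$, as required. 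This argument is uniform in whether or not $-1$ is a pole of $\Phi_s$: if it is not, the expansion is a Taylor series and $a_0=\Phi_{-1}$.

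The genuinely delicate point is not any single computation but the soft analysis underlying the above: one needs to know that the meromorphically continued $\Phi_s$ and each of its Laurent coefficients $a_j$ are \emph{tempered} distributions, that is, that the estimates produced by the resolution-of-singularities proof of Theorem \ref{Thm:mer_cont} are uniform in $\phi$ with respect to the Schwartz seminorms, and that passing to Laurent coefficients commutes with multiplication by $g$ and by $\bar f$. Granting Theorem \ref{Thm:mer_cont} in the stated form, these verifications are routine, and the only new ingredient is the elementary bookkeeping with the functional equation at $s=-1$. (Once $T$ is in hand, the existence of fundamental solutions for constant-coefficient differential operators, alluded to in Section \ref{Sec:FunSol}, follows by Fourier transform.)
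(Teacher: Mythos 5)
Your argument is correct and is precisely the classical Atiyah--Igusa proof that the paper cites but does not reproduce (it only remarks that ``the proof is not difficult''): form $g=f\bar f$, continue $g^s$ meromorphically via Theorem~\ref{Thm:mer_cont}, compare Laurent coefficients at $s=-1$ in the functional equation $g\cdot g^s=g^{s+1}$ to obtain $g\,a_0=1$, and set $T=\bar f\,a_0$. The only points you leave implicit --- that $f\bar f$ is genuinely non-constant (a sum-of-squares argument on the top homogeneous parts of $\Re f$ and $\Im f$) and that the continued family and its Laurent coefficients are tempered --- are exactly the routine verifications carried out in \cite[Sect.~5.5]{IguBook}.
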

	The proof is not difficult, and according to Igusa is an elegant one. A nice corollary of Theorem \ref{Thm:At} is the remarkable fact that there exists fundamental (or elementary) solutions for differential operators with constant coefficients as we shall see below. First we recall that for $P\in D_n=\KK[x,\partial/\partial x]$, its \textit{adjoint operator} $P^\ast$ is the image of the $\KK$-involution of $D_n$ given by $x_i^\ast=x_i$ and $(\partial/\partial x_i)^\ast=-\partial/\partial x_i$. In particular this implies that $(P_1P_2)^\ast=P_2^\ast P_1^\ast$ and $(P_1^\ast)^\ast=P_1$ for every 
	$P_1, P_2\in D_n$. 
	
	This notion of adjoint can be extended up to $\mathcal{S}^\prime(\KK^n)$. In particular one has for every $\phi\in\mathcal{S}(\RR^n)$ that 
	\[1^\ast(\phi)=\phi(0)=:\delta_0(\phi),\]
	which means that $1^\ast=\delta_0$ in $\mathcal{S}^\prime(\RR^n)$. The element $\delta_0$ of $\mathcal{S}^\prime(\RR^n)$ is known as the \textit{Dirac measure of} $\RR^n$ \textit{supported by} $0$. Now consider $f(x) \in \CC[x_1,\ldots, x_n]\setminus \CC$, say $f(x)=\sum_{k\in(\ZZ_{\geq 0})^n}c_k x^k$ for $c_k\in\CC$, and define an element $P$ of  $\CC[\partial/\partial x_1,\ldots,\partial/\partial x_n]$ (i.e.  a \textit{differential operator with constant coefficients}) by 
	\[P=\sum_k \frac{1}{(2\pi\mathrm{i})^{|k|}}c_k(\partial/\partial x)^k.\]
	It is a simple matter to show that for any $T$ in $\mathcal{S}^\prime(\KK^n)$ one has $(fT)^\ast = PT^\ast$.
	\begin{cor}[{\cite[Corollary~3]{At}}]\label{Cor:At}
		If $P$ is a differential operator with constant coefficients, then there exists an elementary solution for $P$, i.e., an element $S$ of 
		$\mathcal{S}^\prime(\RR^n)$ satisfying $PS = \delta_0$.
	\end{cor}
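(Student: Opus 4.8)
The plan is to obtain $PS=\delta_0$ by taking adjoints in $\mathcal{S}'(\RR^n)$ of the identity $fT=1$ furnished by Theorem \ref{Thm:At}, for a polynomial $f$ chosen so that the constant-coefficient operator it determines is exactly $P$.

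First I would produce that $f$. Assuming $P\neq 0$, write $P=\sum_{k}a_k(\partial/\partial x)^k$ with finitely many $a_k\in\CC$ nonzero, and set
\[ f(x)=\sum_{k}(2\pi\mathrm{i})^{|k|}a_k\, x^k\in\CC[x_1,\ldots,x_n]. \]
Then, writing $f(x)=\sum_k c_k x^k$ (so that $c_k=(2\pi\mathrm{i})^{|k|}a_k$), the differential operator $\sum_k\frac{1}{(2\pi\mathrm{i})^{|k|}}c_k(\partial/\partial x)^k$ attached to $f$ by the recipe used just above the statement is precisely $P$. If $P$ happens to be multiplication by a nonzero scalar $c$, then $f=c\in\CC$ and Theorem \ref{Thm:At} does not apply, but $S=c^{-1}\delta_0$ visibly satisfies $PS=\delta_0$; so I may assume $f\in\CC[x_1,\ldots,x_n]\setminus\CC$.

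Next I would apply Theorem \ref{Thm:At} to obtain $T\in\mathcal{S}'(\RR^n)$ with $fT=1$, and then take adjoints. Using the two facts recorded just before the statement, namely $1^\ast=\delta_0$ and $(fT)^\ast=P\,T^\ast$ for every $T\in\mathcal{S}'(\RR^n)$, this gives
\[ \delta_0=1^\ast=(fT)^\ast=P\,T^\ast, \]
so that $S:=T^\ast$ is the desired element of $\mathcal{S}'(\RR^n)$, the adjoint involution being defined on tempered distributions.

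I do not expect a genuine obstacle here: once Theorem \ref{Thm:At} is in hand the corollary is bookkeeping. The only points I would take care to check are that every nonzero constant-coefficient operator really does arise as the operator attached to some polynomial $f\notin\CC$ (immediate by reading off the coefficients, with the scalar case dispatched by hand as above), and that $T\mapsto T^\ast$ is a well-defined self-map of $\mathcal{S}'(\RR^n)$ satisfying $(P_1T)^\ast=P_1^\ast T^\ast$; the latter follows from $(P_1P_2)^\ast=P_2^\ast P_1^\ast$ together with the fact that every element of $D_n$ maps $\mathcal{S}(\RR^n)$ continuously into itself, so that its transpose operates on $\mathcal{S}'(\RR^n)$. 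All the real content sits in Theorem \ref{Thm:At}, and hence ultimately in the meromorphic continuation of $Z_\phi(s,f)$.
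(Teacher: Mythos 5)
Your argument is exactly the paper's: read off the polynomial $f$ from the coefficients of $P$ (with the $(2\pi\mathrm{i})^{|k|}$ normalization), invoke Theorem \ref{Thm:At} to get $fT=1$, and take adjoints using $1^\ast=\delta_0$ and $(fT)^\ast=PT^\ast$ to conclude $S=T^\ast$ works. The only difference is that you explicitly dispatch the degenerate case where $P$ is multiplication by a scalar (so $f\in\CC$ and Theorem \ref{Thm:At} does not apply), a small point the paper's proof passes over silently.
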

	\begin{proof}
		Assume that \[P=\sum_k \frac{1}{(2\pi\mathrm{i})^{|k|}}c_k(\partial/\partial x)^k,\] for some $c_k\in\CC$. Put $f(x)=\sum_{k\in\ZZ_{\geq 0}}c_k x^k$ in $\CC[x_1,\ldots, x_n]\setminus \CC$, then by Theorem \ref{Thm:At}, there is an element $T$ of $\mathcal{S}^\prime(\RR^n)$ satisfying $fT = 1$. By taking $S=T^\ast$, we have that $S$ belongs to $\mathcal{S}^\prime(\RR^n)$ and moreover $PS =(fT)^\ast = 1^\ast=\delta_0$.
	\end{proof}
	\section{The Bernstein-Sato Polynomial}\label{Sec:BerPol}
	So far our approach to Archimedean local zeta functions appeared only related with objects from analysis, we will see in this section how $Z_{\phi}(s,f)$ is connected with objects of more algebraic nature. The algebraic analysis theory or algebraic theory of linear differential systems began in the 60's with the school of M. Sato at Kyoto, see \cite{Sat90} for the translation of his work in the 60's. A similar development was taking place in Moscow, where I.N. Bernstein  was working on similar matters, see \cite{Ber68,Ber71}. The whole mathematical construction is known today as $\mathcal{D}$-module theory and it has connection in many areas of mathematics such as algebraic geometry, singularity theory, topology of varieties, representation theory and of course differential equations, among others. For more on the history of the subject and its connections we recommend the surveys \cite{Gra,Wal, AlJeNuB} or the books \cite{Bjo,Cou}.
	
	In this section we will only present the definition of the Bernstein-Sato polynomial and discuss some of the connections with $Z_{\phi}(s,f)$. Let us start by considering the Weyl algebra $D_n=\KK[x,\partial/\partial x]$. Note that $\KK(x)$ is a $D_n$-module and so is $\KK[x]$ because it is mapped to
	itself under operations of $D_n$. Moreover, we pick another variable $s$ (in addition to $x_1,\ldots,x_n$) and consider the Weyl algebra $D=\KK(s)[x,\partial/\partial x]$. Now we take an element $f(x)\in \KK[x]\setminus\{0\}$ and denote by $S$ the multiplicative subset of $\KK[x]\setminus\{0\}$ generated by $f(x)$, then it can be shown that 
	\[
	S^{-1}(\KK(s)[x])=\KK(s)[x_1,\ldots,x_n,1/f(x)],
	\]
	is a $D$-module. Igusa presents Bernstein's main Theorem in the following form
	\begin{theorem}[{\cite[Lemma~1]{Ber68}}]\label{Thm:Ber68}
		There exists an element $P_0$ of $D_n$ satisfying 
		\begin{equation}\label{Eq:Berns}
			P_0\cdot f(x) = 1.
		\end{equation}
	\end{theorem}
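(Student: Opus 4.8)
The plan is to prove the equivalent functional equation by Bernstein's dimension argument, carried out over the field $\KK(s)$. Write $M=S^{-1}(\KK(s)[x])=\KK(s)[x,1/f]$ with the $D$-module structure in which each $\partial/\partial x_{i}$ acts by $g\mapsto \partial g/\partial x_{i}+s\,(\partial f/\partial x_{i})\,f^{-1}g$ (the ``$f^{s}$-action'', without which the statement would be vacuous), and let $1,f(x)\in\KK(s)[x]\subseteq M$ be the obvious elements, so that ``$P_{0}\cdot f(x)=1$'' is to be read inside $M$. The one non-formal ingredient is \emph{Bernstein's inequality}: over the Weyl algebra over a field of characteristic zero, a nonzero finitely generated module has Hilbert--Bernstein dimension $\geq n$, and one of dimension exactly $n$---a \emph{holonomic} module---has finite length. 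I would take this, and the dimension theory of the Bernstein filtration on which it rests, as known (see \cite{Bjo,Cou}).

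\emph{Step 1 (a holonomic submodule).} Let $\mathcal{M}=D\cdot 1\subseteq M$, the cyclic---hence finitely generated, and nonzero---$D$-submodule generated by $1$, and filter it by $\Omega_{j}=\mathcal{M}\cap\Gamma_{j}$ with
\[
\Gamma_{j}=\bigl\{\,g(x)\,f(x)^{-j}\ :\ g\in\KK(s)[x],\ \deg g\leq j(d+1)\,\bigr\},\qquad d=\deg f\geq 1 .
\]
A short computation with the Leibniz rule gives $x_{i}\Gamma_{j}\subseteq\Gamma_{j+1}$ and $(\partial/\partial x_{i})\Gamma_{j}\subseteq\Gamma_{j+1}$ (applying $\partial/\partial x_{i}$ raises the numerator degree by at most $d$ and the denominator exponent by $1$, and $j(d+1)+d\leq(j+1)(d+1)$), so the $\Gamma_{j}$ are compatible with the Bernstein filtration of $D$; since $\bigcup_{j}\Gamma_{j}=M$ and $1\in\Gamma_{0}$, the $\Omega_{j}$ exhaust $\mathcal{M}$ and the standard good filtration $F_{j}D\cdot 1$ of $\mathcal{M}$ lies inside $\Omega_{j}\subseteq\Gamma_{j}$. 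As $\dim_{\KK(s)}\Gamma_{j}=\binom{n+j(d+1)}{n}$ is a polynomial of degree exactly $n$ in $j$, it follows that $\dim_{\KK(s)}(F_{j}D\cdot 1)$ grows at most like $j^{n}$, i.e.\ $\mathcal{M}$ has dimension $\leq n$; by Bernstein's inequality it is exactly $n$, so $\mathcal{M}$ is holonomic and therefore of finite length.

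\emph{Step 2 (a stabilizing chain and a shift in $s$).} Multiplication by $f$ is the order-zero operator $f\in\KK(s)[x]\subseteq D$, so $f^{k}=f^{k}\cdot 1\in\mathcal{M}$ for all $k\geq 0$ and $D\,f^{k+1}=D\,(f\cdot f^{k})\subseteq D\,f^{k}$. Inside the finite-length module $\mathcal{M}$ the descending chain $\mathcal{M}=D\cdot 1\supseteq D\,f\supseteq D\,f^{2}\supseteq\cdots$ therefore stabilizes, say $D\,f^{k_{0}}=D\,f^{k_{0}+1}$, so there is $Q\in D$ with $Q\cdot f^{k_{0}+1}=f^{k_{0}}$. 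Clearing the $\KK(s)$-denominators of $Q$ yields $\widetilde{Q}\in\KK[s][x,\partial/\partial x]$ and $0\neq\widetilde{b}\in\KK[s]$ with $\widetilde{Q}\cdot f^{k_{0}+1}=\widetilde{b}\,f^{k_{0}}$, an identity in $\KK[s][x,1/f]$ that is polynomial in $s$; it thus survives the substitution $s\mapsto s-k_{0}$, and multiplying the result through by $f^{-k_{0}}$---which conjugates the $(s-k_{0})$-twisted action into the $s$-twisted one---turns it into $P(s)\cdot f=b(s)\cdot 1$ with $P(s):=\widetilde{Q}(s-k_{0})\in\KK[s][x,\partial/\partial x]$ and $b(s):=\widetilde{b}(s-k_{0})\neq 0$. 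Finally $P_{0}:=b(s)^{-1}P(s)\in D$ satisfies $P_{0}\cdot f(x)=1$.

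\emph{Main obstacle.} Everything in Step 2 is soft bookkeeping---the stabilizing chain, the passage to a polynomial-in-$s$ identity, and the translation $s\mapsto s-k_{0}$. The real content is Step 1, which hinges on Bernstein's inequality and on the finiteness of length of holonomic modules, genuinely non-trivial results from the dimension theory of the Bernstein-filtered Weyl algebra; one must also be a little careful with the Leibniz-rule degree estimates so that $(\partial/\partial x_{i})\Gamma_{j}\subseteq\Gamma_{j+1}$ holds exactly and $\{\Gamma_{j}\}$ is a genuine compatible filtration of polynomial growth $n$.
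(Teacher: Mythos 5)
Your proof is correct, and it is essentially the argument the paper is pointing to: the paper states this theorem without proof, citing Bernstein's Lemma~1, and your Step~1 (the filtration $\Gamma_j$ of polynomial growth of degree $n$, Bernstein's inequality, holonomicity and finite length of $D\cdot 1\subseteq \KK(s)[x,1/f]$) together with Step~2 (stabilization of the chain $D\,f^k$ and the shift $s\mapsto s-k_0$) is precisely the standard dimension-theoretic proof from \cite{Ber68} as presented in \cite{Bjo,Cou}. You also correctly resolve the one ambiguity in the statement: $P_0$ must be read as an element of $D=\KK(s)[x,\partial/\partial x]$ (not $D_n$, despite the wording) acting on $S^{-1}(\KK(s)[x])$ via the $f^s$-twisted structure, which is exactly the reading forced by the paper's surrounding discussion.
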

	Multiplying both sides of \eqref{Eq:Berns} by an arbitrary polynomial of $\KK[s]\setminus\{0\}$ we get
	\[P\cdot f(x) = b(s),\]
	for some polynomial $P$ in the `variables'  $s, x_1,\ldots,x_n,\partial/\partial x_1,\ldots,\partial/\partial x_n$ and with coefficients in $\KK$. If now we assume that $s\in\ZZ$, we obtain the following functional equation
	\begin{equation}\label{Eq:Func_Eq}
		P(s,x,\partial/\partial x)\cdot f(x)^{s+1} = b(s)f(x)^s,
	\end{equation}
	which is proved in detail in \cite{Ber72}. The set of all the polynomials $b(s)$ of $\KK[s]$ satisfying \eqref{Eq:Func_Eq} forms an ideal, and the unique monic generator of this ideal is called the \textit{Bernstein-Sato polynomial} $b_f(s)$ \textit{of} $f$. The meromorphic continuation of the real $Z_{\phi}(s,f)$ is stated by Bernstein in the following form.
	\begin{theorem}[{\cite[Theorem~1\textquotesingle]{Ber72}}]\label{Thm:Ber72}
		For a complex number $s$ with $\Re(s)>0$, consider $Z_{\phi}(s,f)$, the local zeta function of $f(x) \in \RR[x_1,\ldots, x_n]\setminus \RR$ and $\phi\in\mathcal{S}(\RR^n)$. Suppose that the Bernstein-Sato polynomial $b_f(s)$ of $f$ factors as 
		\[b_f(s)=\prod_{\lambda}(s+\lambda),\]
		and denote by $\Gamma(s)$ the Gamma function, see \eqref{Eq:Gamma}. Then $\frac{Z_{\phi}(s,f)}{\prod_{\lambda}\Gamma(s+\lambda)}$ has a holomorphic
		continuation to the whole $\CC$.
	\end{theorem}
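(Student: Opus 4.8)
The plan is to turn the Bernstein--Sato functional equation \eqref{Eq:Func_Eq} into a recursion that shifts the argument of $Z_{\phi}(s,f)$ by one, and then to match that recursion against the elementary identity $\Gamma(s+1)=s\,\Gamma(s)$. First I would take $P(s,x,\partial/\partial x)\cdot f(x)^{s+1}=b_f(s)f(x)^s$, pair it with $\phi$, and integrate over $\RR^{n}\setminus f^{-1}(0)$. Splitting the domain into $\{f>0\}$ and $\{f<0\}$ and using that $b_{-f}=b_f$ (the Bernstein--Sato polynomial is unchanged if $f$ is multiplied by a nonzero constant), on each piece $|f|^s$ is a genuine power of a positive real-analytic function, so the functional equation applies verbatim. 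Integrating by parts and moving $P$ onto $\phi$, one obtains an identity of the shape
\[
b_f(s)\,Z_{\phi}(s,f)=Z_{\phi[s]}(s+1,f),
\]
where $\phi[s]=P^{\ast}(s,x,\partial/\partial x)\phi$ depends polynomially on $s$ with coefficients in $\mathcal{S}(\RR^{n})$; writing $\phi[s]=\sum_{j}s^{j}\psi_{j}$ with $\psi_{j}\in\mathcal{S}(\RR^{n})$, the right-hand side becomes the finite sum $\sum_{j}s^{j}Z_{\psi_{j}}(s+1,f)$.

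The second ingredient is the observation that $\prod_{\lambda}\Gamma(s+1+\lambda)=\prod_{\lambda}(s+\lambda)\Gamma(s+\lambda)=b_f(s)\prod_{\lambda}\Gamma(s+\lambda)$, so the very same polynomial $b_f(s)$ that appears in the recursion also relates the Gamma factors at $s$ and at $s+1$. Setting $W_{\psi}(s):=Z_{\psi}(s,f)/\prod_{\lambda}\Gamma(s+\lambda)$ for $\psi\in\mathcal{S}(\RR^{n})$, dividing the functional equation by $\prod_{\lambda}\Gamma(s+\lambda)$ cancels $b_f(s)$ entirely and yields
\[
W_{\phi}(s)=\frac{Z_{\phi[s]}(s+1,f)}{\prod_{\lambda}\Gamma(s+1+\lambda)}=\sum_{j}s^{j}\,W_{\psi_{j}}(s+1).
\]
Thus $W_{\phi}$, which is a priori only holomorphic on $\{\Re(s)>0\}$, is a finite $\CC[s]$-linear combination of the functions $W_{\psi}$ evaluated at $s+1$, for various Schwartz amplitudes $\psi$.

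Now I would run an induction on $N\ge 0$ on the statement: for every $\psi\in\mathcal{S}(\RR^{n})$, the function $W_{\psi}$ extends holomorphically to $\{\Re(s)>-N\}$. The base case $N=0$ is immediate, since $Z_{\psi}(s,f)$ is holomorphic on $\{\Re(s)>0\}$ and $1/\Gamma$ is entire. For the inductive step, the displayed recursion expresses $W_{\psi}(s)$ through the functions $W_{(\cdot)}(s+1)$ with Schwartz amplitudes, which by the inductive hypothesis are holomorphic for $\Re(s+1)>-N$, i.e. for $\Re(s)>-(N+1)$; hence $W_{\psi}$ extends holomorphically to $\{\Re(s)>-(N+1)\}$. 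Letting $N\to\infty$ shows that $W_{\phi}=Z_{\phi}(s,f)/\prod_{\lambda}\Gamma(s+\lambda)$ is entire, which is precisely the assertion.

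The one step that needs care is the passage from the formal functional equation \eqref{Eq:Func_Eq} to the analytic identity $b_f(s)Z_{\phi}(s,f)=Z_{\phi[s]}(s+1,f)$: one must make sense of applying the differential operator $P$ to the merely finitely differentiable function $|f|^{s+1}$, and one must check that the boundary contributions produced by integration by parts across the (possibly singular) hypersurface $f^{-1}(0)$ vanish. I expect this to be the main obstacle, and it is handled in the standard way, by first working in a half-plane $\Re(s)\gg 0$ where $|f|^{s+1}$ is as regular as the order of $P$ requires and vanishes to high order along $f^{-1}(0)$, and then invoking the identity theorem for holomorphic functions to propagate the identity down to $\{\Re(s)>0\}$; the details are carried out by Bernstein in \cite{Ber72}. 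Everything else --- the $\Gamma$-shift identity and the induction --- is purely formal.
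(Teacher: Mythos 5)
Your proposal is correct and follows essentially the same route as the paper: apply the Bernstein--Sato functional equation to shift the argument of $Z_{\phi}$ by one (after reducing to $f_{+}^{s}$, respectively splitting into $\{f>0\}$ and $\{f<0\}$), observe that $\prod_{\lambda}\Gamma(s+1+\lambda)=b_f(s)\prod_{\lambda}\Gamma(s+\lambda)$, and iterate to push the domain of holomorphy to $\Re(s)>-r$ for every $r$. Your induction over all Schwartz amplitudes is just a repackaging of the paper's explicit $r$-fold iteration $b_f(s+r-1)\cdots b_f(s)\,Z_{\phi}(s,f_+)=\int\phi_r\,f_+^{s+r}$, and your cautionary remarks about integration by parts near $f^{-1}(0)$ match what the paper defers to \cite{Ber72}.
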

	\begin{proof}
		In order to avoid complications with the absolute value we first consider the following simplification. Let $f_+^s$ be the function defined by 
		\[f_+^s(x)=\begin{cases}
			f(x)^s & \text{ for } f(x)\geq 0\\
			0 & \text{ for } f(x)<0.\\
		\end{cases}\]
		Then by the functional equation \eqref{Eq:Func_Eq} one has
		\begin{align*}
			b_f(s)\,Z_{\phi}(s,f_+)&=\int\limits_{\RR^{n}\setminus f^{-1}(0)}\phi(x)\,  b_f(s)f_{+}^s(x)\, |\mathrm{d}x|=\int\limits_{\RR^{n}\setminus f^{-1}(0)}\phi(x)\,  P(s)\,f_{+}^{s+1}(x)\, |\mathrm{d}x|\\
			&=\int\limits_{\RR^{n}\setminus f^{-1}(0)}P^\ast(s)\cdot\phi(x)\,  f_{+}^{s+1}(x)\, |\mathrm{d}x|.
		\end{align*}
		Here $P^\ast$ denotes the adjoint operator of $P$, and one has that $P^\ast(s)\cdot\phi(x)\in\mathcal{S}(\RR^n)$. We can repeat the
		above process and, after $r$-times, we get
		\[b_f(s+r-1)\cdots b_f(s)\,Z_{\phi}(s,f_+)=\int\limits_{\RR^{n}\setminus f^{-1}(0)}\phi_r(x)\,  f_{+}^{s+r}(x)\, |\mathrm{d}x|,
		\]
		where $\phi_r(x)=P^\ast(s + r - 1)\cdots P^\ast(s)\cdot\phi\in\mathcal{S}(\RR^n)$. Now, by a simple calculation one has that
		\[\prod_{\lambda}\Gamma(s+\lambda)\,b_f(s) \cdots b_f(s + r - 1)=\prod_{\lambda}\Gamma(s+r+\lambda),
		\]
		which in turn implies that for $s$ with $\Re(s)$ sufficiently large
		\begin{equation}\label{Eq:ProofBern}
			\frac{Z_{\phi}(s,f_+)}{\prod_{\lambda}\Gamma(s+\lambda)}=\frac{1}{\prod_{\lambda}\Gamma(s+r+\lambda)}\,\int\limits_{\RR^{n}\setminus f^{-1}(0)}\phi_r(x)\,  f_{+}^{s+r}(x)\, |\mathrm{d}x|.
		\end{equation}
		Note that, while the LHS of \eqref{Eq:ProofBern} is holomorphic when $\Re(s)>0$, the RHS is holomorphic on $\Re(s)>-r$. Since $r\in\ZZ_{\geq 0}$ is arbitrary, one concludes that $\frac{Z_{\phi}(s,f_+)}{\prod_{\lambda}\Gamma(s+\lambda)}$ has a holomorphic continuation to $\CC$.
	\end{proof}
	The previous proof can be easily adapted for the complex $Z_{\phi}(s,f)$, see \cite[Theorem~5.3.2]{IguBook}. 
	\subsection{Poles of $Z_{\phi}(s,f)$.} By Theorem \ref{Thm:Ber72} the poles of $Z_{\phi}(s,f)$ are integer translates of the roots of $b_f(s)$. This agrees with Theorem \ref{Thm:mer_cont} since the roots of the Bernstein-Sato polynomial of $f$ are negative rational numbers \cite{Kas}. Moreover one has
	\begin{theorem}[{\cite[Theorem~1]{Lic}}]\label{Thm:Ka-Li}
		If $f(x) \in \CC[x_1,\ldots, x_n]\setminus \CC$ and $(X,\pi)$ is a resolution of singularities of $f^{-1}(0)$, with the notation of Section \ref{Sec:Dist_and_Loc}, every root of $b_f(s)$ is of the form 
		\[\lambda=-\frac{\nu_E+k}{N_E}, \] 
		for some $E\in \mathcal{E}$ and some integer $k\geq 0$.
	\end{theorem}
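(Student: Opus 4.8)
\emph{Strategy.} The plan is to pin down $b_f(s)$ by pushing the problem onto a resolution, where the offending invariant becomes an explicit monomial computation. Fix an embedded resolution $\pi\colon X\to\CC^n$ of $f^{-1}(0)$ as in Section~\ref{Sec:Dist_and_Loc}, so that around each $p\in(f\circ\pi)^{-1}(0)$ one has $(f\circ\pi)(y)=\varepsilon(y)\prod_{i=1}^m y_i^{N_i}$ and $\pi^\ast\big(\bigwedge_i\mathrm{d}x_i\big)=\eta(y)\big(\prod_{i=1}^m y_i^{\nu_i-1}\big)\bigwedge_i\mathrm{d}y_i$ with $\varepsilon,\eta\in\mathcal O_p^\times$; equivalently $K_{X/\CC^n}=\sum_{E\in\mathcal E}(\nu_E-1)E$. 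First I would invoke Kashiwara's basic observation that $\mathcal D_{\CC^n}[s]\cdot f^s$ is a holonomic $\mathcal D_{\CC^n}[s]$-module, so that $s$ acts with nonzero minimal polynomial, namely $b_f$, on $\mathcal D_{\CC^n}[s]f^s/\mathcal D_{\CC^n}[s]f^{s+1}$; then transport this module to $X$, bounding its $s$-action in terms of the $\mathcal D$-module direct image $\pi_+\big(\mathcal D_X[s]\,(f\circ\pi)^s\otimes\omega_{X/\CC^n}\big)$. The twist by the relative canonical sheaf $\omega_{X/\CC^n}$ is exactly what produces $\nu_E$, rather than merely $N_E$, in the final formula.

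\emph{Local model.} Granting such a comparison, it remains to bound the roots of the $b$-function of the ``monomial with weight'' model $w\cdot g_0^{\,s}$, where $g_0=\prod_{i=1}^m y_i^{N_i}$ and $w=\prod_{i=1}^m y_i^{\nu_i-1}$. Here a one-line computation gives $\prod_{i=1}^m\partial_{y_i}^{N_i}\big(w\cdot g_0^{\,s+1}\big)=c\cdot\prod_{i=1}^m\prod_{j=0}^{N_i-1}\big(s+\tfrac{\nu_i+j}{N_i}\big)\cdot\big(w\cdot g_0^{\,s}\big)$ with $c\in\CC^\times$, so the roots of the corresponding $b$-function already have the shape $-\tfrac{\nu_i+j}{N_i}$. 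Including the unit $\varepsilon$ does not affect this $b$-function, a standard local analytic fact, and the passage through $\pi_+$ introduces further nonnegative integer shifts, so that the bounded range $0\leq j<N_i$ is replaced by $0\leq k$.

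\emph{Globalization.} Since $\pi$ is proper, a neighbourhood of $f^{-1}(0)$ in $X$ is covered by finitely many charts of the above type. Patching the local estimates — this too is part of the comparison step, to the effect that $b_f$ divides a finite least common multiple, taken over those charts, of the twisted local $b$-functions — yields that every root of $b_f(s)$ has the form $-\tfrac{\nu_E+k}{N_E}$ for some $E\in\mathcal E$ and some integer $k\geq 0$. As a check, for the trivial resolution of a smooth hypersurface one has $(N_E,\nu_E)=(1,1)$ and recovers the familiar fact that the roots lie among $-1,-2,\dots$, with $s+1$ a universal factor.

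\emph{Main obstacle.} The genuine difficulty is the comparison of the first paragraph: making precise that $b_f$ divides a finite product assembled from the $b$-functions of $\pi_+\big(\mathcal D_X[s](f\circ\pi)^s\otimes\omega_{X/\CC^n}\big)$, with the right canonical-bundle shift and with control on which integer translates occur. This rests on the functoriality of $b$-functions under $\mathcal D$-module direct images, and is where holonomicity — and, implicitly, Kashiwara's rationality theorem \cite{Kas} — is really used. Alternatively, one can argue more in the spirit of the present survey, following \cite{Lic}: by the functional equation \eqref{Eq:Func_Eq} the roots of $b_f$ are, up to integer translation, among the poles of $Z_\phi(s,f)$, while Theorem~\ref{Thm:mer_cont} confines the latter to $\bigcup_{E\in\mathcal E}-(\nu_E+\ZZ_{\geq 0})/N_E$; the extra ingredient on that route is a non-cancellation statement ensuring each root of $b_f$ is actually realized as such a pole for a suitable amplitude $\phi$.
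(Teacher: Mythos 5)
The paper states Theorem~\ref{Thm:Ka-Li} without proof, as a citation of Lichtin \cite{Lic} (refining Kashiwara \cite{Kas}), so there is no in-paper argument to measure you against; I can only assess your outline against the known proof. Your strategy is in fact the right one and the one Lichtin follows: pull $f^s$ back along the resolution, twist by the relative canonical sheaf so that both numerical data $(N_E,\nu_E)$ enter, compute the $b$-function of the normal-crossings model explicitly, and descend through the proper direct image. Your local computation checks out: in one variable, $\partial_y^{N}\bigl(y^{\nu-1}y^{N(s+1)}\bigr)=N^{N}\prod_{j=0}^{N-1}\bigl(s+\tfrac{\nu+j}{N}\bigr)\,y^{\nu-1}y^{Ns}$, and multiplying by a unit does not change the local analytic $b$-function. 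The identification of $\omega_{X/\CC^n}=\sum_E(\nu_E-1)E$ as the source of the $\nu_E$ in the numerator is exactly the point of Lichtin's refinement over the bare rationality statement.

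That said, as a proof the proposal is incomplete, and you say so yourself: everything rests on the ``comparison step,'' namely that $b_f(s)$ divides a finite product of integer translates of the local $b$-functions of the twisted module upstairs. That is Kashiwara's theorem on $b$-functions under proper direct images of holonomic $\mathcal{D}$-modules; it is the genuinely hard part, nothing in the sketch establishes it, and one must also control the \emph{sign} of the integer shifts introduced by $\pi_+$ (they must be nonnegative to land in $k\geq 0$ rather than $k\in\ZZ$). The ``alternative route'' you mention at the end cannot be completed as stated: the functional equation \eqref{Eq:Func_Eq} together with Theorem~\ref{Thm:mer_cont} shows that the poles of $Z_{\phi}(s,f)$ lie among integer translates of roots of $b_f(s)$, which is the wrong inclusion for your purpose, and even granting a non-cancellation statement you would only recover each root modulo $\ZZ$, not the assertion $k\geq 0$. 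Whether every root of $b_f$ is actually realized as a pole of some $Z_{\phi}(s,f)$ is known only in special cases (cf.\ the discussion of \cite{Loe85} in the paper), so that route should not be presented as a viable alternative. In short: correct strategy, correct local model, but the decisive $\mathcal{D}$-module comparison is assumed rather than proved.
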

	
	In most cases, Theorem \ref{Thm:Ka-Li} gives only a big list of candidate roots for $b_f(s)$ and the problem of the actual determination of the true roots is largely open. Of course, the same problem translates to the candidate poles of $Z_{\phi}(s,f)$, since in general, Theorem \ref{Thm:mer_cont} gives a very big list of candidate poles and the determination of the true poles is hard. For instance it is known that, when $f$ is a reduced plane curve singularity or an isolated quasi homogeneous singularity, the set of poles of the complex $Z_{\phi}(s,f)$ coincides exactly with the set $\{\lambda-r\mid \lambda 
	\text{ is a root of } b_f(s), r\in\ZZ_{\geq 0}\}$, see \cite{Loe85}. See also \cite{Bla} for some recent results about the poles of the complex $Z_{\phi}(s,f)$ in the case of curves.
	
	It is also known that the roots of the Bernstein-Sato polynomial are deeply connected with the geometry of $f$. Malgrange \cite{Mal} showed that, if $\lambda$ is a root of $b_f(s)$ then $\exp(2\pi\mathrm{i}\lambda)$ is an eigenvalue of the local monodromy of $f$ at some point of $f^{-1}(0)$, and all eigenvalues are obtained in this way, see \cite{Bar84}. We discuss some of these results in Section \ref{Sec:CompxOI}.
	
	We have started this section mentioning the work by M. Sato in the 60's. Part of his interest in the theory of $\mathcal{D}$-modules was motivated by another mathematical theory, namely prehomogeneous vector spaces. If $G$ is a connected reductive algebraic subgroup of $GL_n(\CC)$ acting transitively on the complement of
	an absolutely irreducible hypersurface $f^{-1}(0)$ in $\CC^n$, then $(G,\CC^n)$ is called a \textit{prehomogeneous vector space}. In this case one may show that $f(x)$ is a homogeneous polynomial and also a \textit{relative invariant} of $G$, that is, 
	\[ f(g\cdot x) = \nu(g)f(x),\quad\text{for all }\, g \in G,\]
	where $\nu$ belongs to $\Hom(G,\CC^\times)$. See \cite{Sat90} for the translation of the original work of M. Sato and \cite[Chap. 6]{IguBook} for a detailed exposition. This theory of prehomogeneous vector spaces contains Archimedean zeta functions of some group invariants, see e.g. \cite{Sat74,Kim82,Gra}. It may be useful to compute explicitly some zeta functions, as in the following example, given at the end of \cite[Chap. 6]{IguBook}:
	\[\int\limits_{\CC^{n^2}} \exp(-2\pi \ltrans{x} \bar{x})\, |\det(x)|_\CC^{s}\,|\mathrm{d}x|=\frac{1}{(2\pi)^{ns}}\cdot\prod_{r=1}^n\frac{\Gamma(s+r)}{\Gamma(r)}.
	\]
	\section{Oscillatory Integrals}\label{Sec:OscInt}
	Denote by $\varepsilon(x)$ the standard additive character of $\KK=\RR$ or $\CC$, which is defined as 
	\[\varepsilon(x)=\begin{cases}
		\exp(2\pi\mathrm{i}x), \text{ for } x\in\RR\\
		\exp(4\pi\mathrm{i}\Re(x)), \text{ for } x\in\CC.
	\end{cases}\]
	\begin{definition}\label{Def:OscInt}
		If $f(x) \in \KK[x_1,\ldots, x_n]\setminus \KK$ and $\phi\in\mathcal{S}(\KK^n)$, then the \textit{oscillatory integral} attached to $(f,\phi)$ is defined as 
		\[I_\phi(\tau;f)=\int\limits_{\KK^{n}\setminus f^{-1}(0)}\phi(x)\ \varepsilon(\tau\cdot f(x))\ |\mathrm{d}x|,
		\] 
		for $\tau\in\KK$.
	\end{definition}
	This type of integrals appears frequently in a great number of situations of physical and mathematical interest, see e.g. \cite[Chapter 6]{AG-ZV} and  \cite[Chapter VIII]{Ste}. More generally $f$ is considered to be a $\KK$-analytic function, which is called the \textit{phase} and the function $\phi$ is called the \textit{amplitude}. One of the main problems in the theory of oscillatory integrals is to study the asymptotic behaviour of $I_\phi(\tau;f)$ when the norm of the parameter $\tau$ tends to $\infty$. The stationary phase principle states that the main contribution in the asymptotics is given by neighbourhoods of the critical points of the phase. Moreover, this asymptotic behaviour is controlled by the poles of 
	$Z_{\phi}(s,f)$.
	
	It is also known that the integral $I_\phi(\tau;f)$ can be rewritten in terms of one dimensional integrals using the Gel'fand-Leray forms, defined
	as follows. Assume that $f$ is a $\KK$-analytic function, then the Gel'fand-Leray form $\omega_f(x,t)$ is the unique $(n-1)$-form on 
	the level hypersurface $X_t = \{x\mid f(x)=t\}$ $(t\neq0)$, which satisfies
	\[\mathrm{d}f\wedge \omega_f(x,t)=\mathrm{d}x_1\wedge\cdots\wedge\mathrm{d}x_n.\]
	If $C_f:=\{x\in\KK^n\mid \nabla f(x) =0\}$ denotes \textit{the critical set of} $f$, and one assumes that $C_f\cap\mathrm{Supp}(\phi)\subset f^{-1}(0)$, then 
	\begin{equation}\label{Eq:FibInt}
		I_\phi(\tau;f)=\int_{\KK} \varepsilon(\tau\cdot t)\Bigg(\int_{X_t}\phi(x)\cdot \omega_f(x,t) \Bigg) |\mathrm{d}t|.
	\end{equation}
	\begin{theorem}\label{Thm:AsyExp}
		Consider $f(x) \in \KK[x_1,\ldots, x_n]\setminus \KK$ and $\phi\in\mathcal{S}(\KK^n)$. If we assume that  
		\[C_f\cap\mathrm{Supp}(\phi)\subset f^{-1}(0),\]
		then the oscillatory integral $I_\phi(\tau;f)$ can be expanded in an asymptotic series of the form
		\begin{equation}\label{Eq:AsyExp}
			\sum_{\alpha}\sum_{k=0}^{n-1}S_{k,\alpha}(\phi)\tau^\alpha\,(\ln \tau)^k,\quad\text{as } |\tau|_\KK\to\infty. 
		\end{equation}
		Here the numerical coefficients $S_{k,\alpha}(\phi)$ belong to $\mathcal{S}^\prime(\KK^n)$, and the	parameter $\alpha$ runs through the arithmetic progressions of the form \eqref{Eq:Poles}, given by the poles of $Z_{\phi}(s,f)$.
	\end{theorem}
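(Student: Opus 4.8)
The plan is to reduce, via the very resolution of singularities $(X,\pi)$ used in the proof of Theorem~\ref{Thm:mer_cont}, the oscillatory integral $I_\phi(\tau;f)$ to a finite sum of elementary monomial oscillatory integrals whose asymptotics can be computed by hand, and then to read off the exponents $\alpha$ from the poles of $Z_\phi(s,f)$. First I would pull back by $\pi$ and insert a partition of unity subordinate to a cover of the compact set $\pi^{-1}(\Supp\phi)$ by charts in which \eqref{Eq:localeqs} holds. Over the open part where $\pi$ restricts to an isomorphism onto $\KK^n\setminus\Sing_f$, the hypothesis $C_f\cap\Supp(\phi)\subset f^{-1}(0)$ guarantees that the pulled-back phase has no critical point on the (compact) support of the corresponding cutoff; repeated integration by parts (the non-stationary phase principle) then shows that this piece contributes a term that is $O(|\tau|_\KK^{-M})$ for every $M$, and may be discarded. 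What remains is a finite sum of integrals over charts $U_p$ of the shape
\[
\int_{U_p}\Psi(y)\,\varepsilon\!\left(\tau\,\varepsilon_0(y)\prod_{i=1}^m y_i^{N_i}\right)\prod_{i=1}^m|y_i|_\KK^{\nu_i-1}\,|\mathrm{d}y|,
\]
where $\Psi$ is smooth with compact support (absorbing $\phi\circ\pi$, the partition function and the Jacobian unit $\eta$) and $\varepsilon_0$ is a unit coming from $f\circ\pi$.

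Next I would analyse a single such monomial integral. The unit $\varepsilon_0(y)$ can be removed by a local change of coordinates (taking a suitable root, since $\varepsilon_0$ is nowhere zero on $U_p$) or, more cheaply, treated as a lower-order perturbation via the real-analytic stationary phase lemma; in either case one reduces to the pure monomial phase $\prod_{i=1}^m y_i^{N_i}$. Expanding $\Psi$ in a Taylor series in the $y_i$ and using Fubini, the integral factors into one-dimensional pieces
\[
\int_{\KK}\psi(u)\,\varepsilon(\tau u^{N})\,|u|_\KK^{\nu-1}\,|\mathrm{d}u|,
\]
each of which has a classical asymptotic expansion in the powers $|\tau|_\KK^{-(\nu+k)/(qN)}$, $k\in\ZZ_{\geq0}$ — this is exactly the Fourier transform of the one-variable complex power $|u|_\KK^{s}$, equivalently the one-variable case of the zeta integral together with Mellin inversion. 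Multiplying these $m$ expansions produces a series in the powers $\tau^\alpha$ with $\alpha$ ranging over $\bigcup_{E\in\mathcal E}-(\nu_E+\ZZ_{\geq0})/(qN_E)$, i.e. precisely the set \eqref{Eq:Poles} of candidate poles of $Z_\phi(s,f)$, and a logarithmic factor $(\ln\tau)^k$ appears exactly when several of these exponents collide; since at most $\min\{n,\Card(\mathcal E)+1\}$ of them can coincide, one gets $k\le n-1$, matching the range in \eqref{Eq:AsyExp}.

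Finally I would reassemble the pieces: summing over the charts and the partition of unity gives the expansion \eqref{Eq:AsyExp}, and since each coefficient is a finite combination of derivatives, point evaluations, and fiber integrals of $\phi\circ\pi$, it extends to a continuous functional $S_{k,\alpha}\in\mathcal S'(\KK^n)$. The identification of the exponents with the poles of $Z_\phi(s,f)$ is made conceptual through the fiber-integral formula \eqref{Eq:FibInt}: the Mellin transform in $t$ of $F(t)=\int_{X_t}\phi\,\omega_f$ equals $Z_\phi(s,f)$, so by Theorem~\ref{Thm:mer_cont} the function $F$ admits an asymptotic expansion at $t=0$ in powers $t^{-\alpha-1}(\ln t)^k$ indexed by \eqref{Eq:Poles}, and a pole of order $k+1$ at $s=\alpha$ corresponds under Fourier transformation to a term $\tau^\alpha(\ln\tau)^k$ in $I_\phi(\tau;f)$. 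The main obstacle is precisely this transfer step: one must control the remainders in the one-dimensional expansions uniformly and justify term-by-term Fourier transformation of the asymptotic expansion of $F$ near its only relevant critical value — a \emph{transfer lemma} whose proof rests on the explicit Fourier transforms above and on careful tail estimates — while keeping the bookkeeping that bounds the logarithmic order by $n-1$.
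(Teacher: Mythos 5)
The paper itself does not prove this theorem; it only records that Igusa's proof proceeds by defining three spaces of functions admitting the relevant asymptotic expansions (of $I_\phi(\tau;f)$ at $|\tau|_\KK\to\infty$, of the fiber integral $\int_{X_t}\phi\,\omega_f$ at $t\to 0$, and of functions with the pole structure of $Z_\phi(s,f)$) and showing these are homeomorphic under the Fourier and Mellin transforms. Your final paragraph is essentially a hands-on version of exactly this strategy -- Mellin transform of the Gel'fand--Leray function equals $Z_\phi(s,f)$, then Fourier transform in $t$ via \eqref{Eq:FibInt} -- and that route, carried out carefully, does work. The discarding of the region away from $(f\circ\pi)^{-1}(0)$ by non-stationary phase is also fine (on $\Supp\phi\setminus f^{-1}(0)$ the hypothesis gives $\nabla f\neq 0$, and on $f^{-1}(0)\setminus\Sing_f$ one has $\nabla f\neq 0$ as well).

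The genuine gap is in your middle step: the claim that, after reduction to a chart, ``using Fubini, the integral factors into one-dimensional pieces'' $\int_\KK\psi(u)\,\varepsilon(\tau u^N)\,|u|_\KK^{\nu-1}\,|\mathrm{d}u|$. The phase in the chart is $\tau\prod_{i=1}^m y_i^{N_i}$, which is multiplicative, not additive, in the $y_i$; hence $\varepsilon\bigl(\tau\prod_i y_i^{N_i}\bigr)\neq\prod_i\varepsilon(\tau y_i^{N_i})$ and Fubini does not decouple the variables. (Contrast this with the zeta integral, where $\prod_i|y_i|_\KK^{N_is+\nu_i-1}$ genuinely factors; that is why the candidate poles form the \emph{union} \eqref{Eq:Poles} of arithmetic progressions rather than sums of them.) A naive iterated-integral version of your argument produces exponents that are sums $-\sum_i(\nu_i+k_i)/(qN_i)$ and does not by itself recover the stated exponent set, nor the bound $k\leq n-1$ on the logarithmic order. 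The standard repair is the one you sketch at the end: do the monomial analysis on the \emph{fiber} integral $F(t)=\int_{X_t}\phi\,\omega_f$ (equivalently, read off its expansion at $t=0$ from the poles of its Mellin transform $Z_\phi(s,f)$, which Theorem \ref{Thm:mer_cont} locates in \eqref{Eq:Poles}), and only then apply a one-dimensional Fourier transfer lemma in the single variable $t$. So the correct proof runs entirely through the fiber integral; the direct chartwise factorization of the oscillatory integral should be deleted, not merely supplemented.
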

	The previous statement is a reformulation of \cite[Theorem~1.6,~Ch.~III]{IguTata}, where three objects are related: the asymptotic expansions of $I_\phi(\tau;f)$ when $|\tau|_\KK\to\infty$, the asymptotic expansions of the fiber integral 
	\[\int_{X_t}\phi(x)\cdot \omega_f(x,t),  \text{ when } |t|_\KK\to 0,\] 
	and the poles of $Z_{\phi}(s,f)$. For the proof, some functional spaces are defined for each one of the aforementioned objects and then it is shown that these spaces are homeomorphic by using the Fourier and Mellin tranforms. The proof also includes the case of $p$-adic fields. 
	
	Another relevant characteristic of the asymptotic series of $I_\phi(\tau;f)$ is the \textit{oscillation index} $\beta_f$ of $f$, which is the leading power of the parameter $\tau$ in the expansion \eqref{Eq:AsyExp}. It turns out that the negative of $\beta_f$ coincides in some cases with another relevant invariant of the singularity defined by $f$: the \textit{log canonical threshold} of $f$. It may be defined in terms of the resolution $(X,\pi)$ of $f^{-1}(0)$ as
	\[\mathrm{lct}(f):=\min_{E\in\mathcal{E}}\frac{\nu_{E}+1}{N_E},\] 
	where we have used the notation of \eqref{Eq:Poles} in Section \ref{Sec:Dist_and_Loc}. It is well known that $-\mathrm{lct}(f)$ is exactly the largest pole of $Z_{\phi}(s,f)$. 
	For properties and more information about $\mathrm{lct}(f)$ and its role in birational geometry see e.g. \cite{Mus06,Mus12}. 
	\subsection{Real Oscillatory Integrals}\label{Sec:RealOI}
	In the realm of harmonic analysis, the real oscillatory integrals of type $I_\phi(\tau;f)$ have been an essential player for a long time. They have also been the basis for the definition of differential operators and have become a classical subject in harmonic
	analysis, partial diferential equations and geometry (geometric analysis) of manifolds. For instance, in \cite{Ste} there is the following class of oscillatory integrals of the second kind (being the real $I_\phi(\tau;f)$ the ones of the first kind):
	\[T_\tau(f)(x) = \int\limits_{\RR^{n}}\exp(\mathrm{i}\tau f(x,y))\, K(x,y) g(y)\ |\mathrm{d}y|.
	\]
	Here the kernel $K(x,y)$ carries an oscillatory factor and the function $g$ satisfies some smoothness condition. In this case, the important matter are the boundedness properties of $T_\tau(f)(x)$. A key hypothesis in many cases is the analyticity of the phase $f$, in this case Theorem \ref{Thm:AsyExp} was proved first by Jeanquartier in \cite{Jea70}, and then by Malgrange \cite{Mal74}, Igusa \cite{IguTata} and Jeanquartier \cite{Jea79}, among others. See also \cite[Section 7.3.2]{AG-ZV}.
	
	An important class of real analytic phases for which the asymptotics of $I_\phi(\tau;f)$ and the boundedness of $T_\tau(f)(x)$ (for some $T_\tau$) is better understood is the class of  non-degenerate  functions with respect to their Newton polyhedron. In his seminal paper \cite{Var}, A. N. Varchenko investigates the oscillatory index of $I_\phi(\tau;f)$ as well as the asymptotic series \eqref{Eq:AsyExp} in terms of the geometry of the Newton polyhedron of the phase. Loosely speaking, Varchenko's idea is to attach a Newton polyhedron $\mathcal{NP}(f)$ to the phase function $f$ and then define a non-degeneracy condition with respect to $\mathcal{NP}(f)$. Then one may construct a toric variety associated to the Newton polyhedron, and use the well known toric resolution of singularities to give a list of parameters $\alpha$ in \eqref{Eq:AsyExp} (equivalently a list of candidate poles for $Z_{\phi}(s,f)$). Moreover, this list can be read off from the geometry of $\mathcal{NP}(f)$ and a refinement of its normal fan. For example, Varchenko shows that under some mild conditions
	\[\mathrm{lct}(f)=-\beta_f=\frac{1}{t_0},\]
	where $t_0$ is the entry of the point in $\RR^n$ given by the intersection of the diagonal of $\RR^n$ with the boundary of $\mathcal{NP}(f)$. A more detailed exposition of the original article of Varchenko is presented in \cite[Chapters 6-8]{AG-ZV}. 
	
	It is well known to the specialist that the only data of $\mathcal{NP}(f)$ that give poles of $Z_{\phi}(s,f)$ are contained in the normal fan of the polyhedron, i.e. the extra rays required for the toric resolution give rise to fake candidate poles. This list of fake poles was removed by Denef and Sargos in \cite{DenSar} and by a different method by Aroca, G\'omez-Morales and the author in \cite{ArGoLe}.
	
	The original ideas of Varchenko have been extensively used and generalized in analysis. In particular they have been used in the study of: more general oscillatory integrals, estimation of the size of sublevel sets, boundedness and decay estimates of operators, among other matters. See for instance the works
	\cite{PhStSt99,PhSt00,PhStSt01,Kar,DeNiSa,CaWaWr,GrPrTa,Ikr,Gre10A,Gre10B,CoGrPr,OkaTak,KamNos16,Xia16,Gil,GiGrXi,Gre19}, 
	and the references therein. In particular we should note that Kamimoto and Nose have extended  in \cite{KamNos16} the work of Varchenko for the case when the phase $f$ is smooth and the amplitude has a zero at a critical point of $f$. They show that the optimal rates of decay, and other related parameters, for some weighted
	oscillatory integrals can be expressed in terms of the geometry of $\mathcal{NP}(f)$ and a refinement of its normal fan. Similar results are given for the poles of the associated local zeta functions. 
	\subsection{Complex Oscillatory Integrals}\label{Sec:CompxOI}
	We now consider the case of holomorphic functions $f$. In some sense, the corresponding counterpart of the oscillatory integrals $I_\phi(\tau;f)$  of real analytic functions are integrals over vanishing cycles, see  e.g. \cite{Mal74,Bar84,AG-ZV,Pem10A,Pem10B}. We consider two holomorphic functions on $\CC^n$, 
	$f(x)$ and $\phi(x)$. The integrals of interest have the form
	\begin{equation}\label{Eq:CompxOI}
		\int_{\Gamma}\exp(\tau f(x))\, \phi(x) \mathrm{d}x_1\wedge\cdots\wedge\mathrm{d}x_n,
	\end{equation}
	where $\Gamma$ is a real $n$-dimensional chain lying in $\CC^n$, and $\tau$ is a large real parameter. These integrals also admit asymptotic expansions, but this time related with the monodromy of $f$ at some point of $f^{-1}(0)$. We only present here the main ideas of Malgrange \cite{Mal74}, following closely the presentation of \cite[Chapter 11]{AG-ZV}. We start by assuming that 
	\[f: (\CC^n,0)\rightarrow (\CC,0)\]
	has a critical point $x_0$ of finite multiplicity, in fact we will assume that $x_0$ is the origin. We will say that the $n$-dimensional chain $\Gamma$ is \textit{admissible} if on its boundary one has $\Re(f(x))<0$. This notion admits the following interpretation in terms of the Milnor fibration.
	
	Let $B_\epsilon$ be the open ball centered at the origin with radius $\epsilon$ and let $D_\eta$ be the open punctured  disk defined by the condition $\{t\in\CC\,\textbf{;}\,|t|<\eta\}$, with $0<\eta\ll\epsilon\ll1$. We also put $X=B_\epsilon\cap f^{-1}(D_\eta)$. We recall that the \textit{Milnor fibration} of $f$ at the origin is the smooth locally trivial fibration defined by
	\begin{equation}\label{Eq:MilFib}
		f\vert_{X}:\ X\rightarrow D_\eta.
	\end{equation}
	We denote by $D_\eta^-$ the set $\{t\in D_\eta\,\textbf{;}\, \Re(t)<0\}$ and by $X^-$ the set $X\cap f^{-1}(D_\eta^-)$. With these definitions, the $n$-dimensional chain $\Gamma$ is admissible if $\Gamma\subset X$ and $\partial(\Gamma)\subset X^-$. Two admissible chains $\Gamma_1,\Gamma_2$ are said to be equivalent  if there exists an $(n + 1)$-dimensional chain $V \subset X$ such that 
	\[(\Gamma_1-\Gamma_2+\partial V)\subset X^-.\]
	The equivalence classes of admissible chains with the operations of addition and
	multiplication by scalars form a vector space, by definition coinciding with the homology group $H_n(X,X^-)$. 
	
	Since $f\vert_{X^-}:\ X^-\rightarrow D_\eta^-$ is also a trivial fibration, one may show that the homology of $X^-$ is isomorphic to the homology of  an arbitrary fiber over $D_\eta^-$. This means that for any $t\in D_\eta^-$, there is an isomorphism
	\[\partial_t\ :\ H_n(X,X^-)\rightarrow H_{n-1}(X_t),\]
	where $X_t$ is a Milnor fiber (under \eqref{Eq:MilFib}) and $H_{n-1}(X_t)$ is the reduced homology group. Now, for an admissible chain $\Gamma$, representing the class $[\Gamma]\in H_{n}(X,X^-)$, one may contract its boundary in $X^-$ to the fiber $X_t$, giving a cycle that represents the element $\partial_t[\Gamma]\in H_{n-1}(X_t)$. The family of classes obtained in this way depends continuously on $t\in D_\eta^-$. 
	\begin{theorem}[{\cite[Theorem~7.1]{Mal74}}]\label{Thm:AG-ZV1}
		Let $\omega$ be a holomorphic differential $n$-form defined on $X$, and take $[\Gamma]\in H_{n}(X,X^-)$. Then there exists a small positive number $t_0\in D_\eta$ ,such that
		\[\int_{\Gamma}\exp(\tau f)\cdot\omega \approx \int_{0}^{t_0}\exp(-\tau t)\Bigg(\int_{\partial_{-t}[\Gamma]}\, \omega_f(x,t)\Bigg)\mathrm{d}t.\]
		The sign $\approx$ means that the integrals are equal up to a negative integer power of the parameter $\tau$ when $\tau \to \infty$.
	\end{theorem}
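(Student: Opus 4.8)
The plan is to fiber the integral $\int_\Gamma\exp(\tau f)\,\omega$ along the map $f$ and to carry out the integration over the fibres by means of the Gel'fand--Leray form, so that the Laplace-transform structure of the right-hand side emerges.

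First I would note that $\exp(\tau f)\,\omega$ is a \emph{closed} $n$-form on $X$: since $\omega$ is a holomorphic top-degree form on the open set $X\subset\CC^n$ one has $\mathrm{d}f\wedge\omega=0$, hence $\mathrm{d}\bigl(\exp(\tau f)\,\omega\bigr)=\tau\exp(\tau f)\,\mathrm{d}f\wedge\omega=0$. Therefore $\int_\Gamma\exp(\tau f)\,\omega$ depends only on $[\Gamma]\in H_n(X,X^-)$ modulo integrals over chains lying in $X^-$: if $\Gamma_1,\Gamma_2$ are equivalent admissible chains, so that $c:=\Gamma_1-\Gamma_2+\partial V\subset X^-$, then Stokes' theorem and closedness give $\int_{\Gamma_1}\exp(\tau f)\,\omega-\int_{\Gamma_2}\exp(\tau f)\,\omega=\int_c\exp(\tau f)\,\omega$. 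Any such chain $c$ maps under $f$ into $D_\eta^-$, where $\lvert\exp(\tau t)\rvert=\exp(\tau\,\Re t)$ with $\Re t<0$; an integration by parts in the variable $t$, using that the fibre periods have moderate (polynomial-logarithmic) growth at $0$ and are smooth away from it, then yields $\int_c\exp(\tau f)\,\omega=O(\tau^{-m})$ for some positive integer $m$. Hence it suffices to prove the asserted identity for one conveniently chosen representative of $[\Gamma]$, up to the error allowed by $\approx$.

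Next I would fix that representative. Since $\partial_t\colon H_n(X,X^-)\to H_{n-1}(X_t)$ is an isomorphism and $f\vert_{X^-}\colon X^-\to D_\eta^-$ is a trivial fibration, one can build a chain $\widetilde\Gamma$, homologous to $\Gamma$ in $H_n(X,X^-)$, that is genuinely fibered over the segment $[-t_0,0)$ of the negative real axis: $\widetilde\Gamma$ is swept out by the continuous family of $(n-1)$-cycles $\gamma_s=\partial_s[\Gamma]\subset X_s$ for $s\in[-t_0,0)$, obtained by parallel transport. Over $f^{-1}\bigl([-t_0,0)\bigr)$ the map $f$ is a submersion, so $\omega=\mathrm{d}f\wedge\omega_f(x,t)$ with $\omega_f$ the Gel'fand--Leray form, and Fubini's theorem along the fibration gives
\[
\int_{\widetilde\Gamma}\exp(\tau f)\,\omega=\int_{-t_0}^{0}\exp(\tau s)\Bigl(\int_{\gamma_s}\omega_f(x,s)\Bigr)\,\mathrm{d}s .
\]
Substituting $s=-t$ and recalling $\gamma_{-t}=\partial_{-t}[\Gamma]$ transforms the right-hand side into $\int_0^{t_0}\exp(-\tau t)\bigl(\int_{\partial_{-t}[\Gamma]}\omega_f(x,t)\bigr)\,\mathrm{d}t$. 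Combined with the first step---the passage $\Gamma\rightsquigarrow\widetilde\Gamma$ costing only an $O(\tau^{-m})$ term---this is exactly the stated relation.

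The hard part will be the construction of $\widetilde\Gamma$ and the control of the family $\gamma_s$ as $s$ approaches the critical value: one must verify that the transported cycles extend up to a neighbourhood of the singular fibre $X_0$ without their boundaries leaving $X^-$, and that the periods $\int_{\gamma_s}\omega_f(x,s)$ remain integrable as $s\to 0$. Integrability follows from the classical behaviour of periods of the Gauss--Manin connection, whose asymptotic exponents exceed $-1$ (cf.\ Theorem~\ref{Thm:AsyExp}); the extension up to the singular fibre is where the geometry of the Milnor fibration and the behaviour of the vanishing cycles enters in an essential way. By contrast, the closedness of $\exp(\tau f)\,\omega$ and the Fubini step are routine.
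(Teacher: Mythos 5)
The paper does not actually prove this statement: it is quoted from Malgrange and the surrounding text only assembles the objects needed to state it ($X$, $X^-$, admissible chains, the boundary isomorphism $\partial_t$, the Gel'fand--Leray form). Your outline is the standard argument behind Malgrange's theorem and fits that set-up exactly: closedness of $\exp(\tau f)\,\omega$ reduces everything to a convenient representative of $[\Gamma]\in H_n(X,X^-)$, and a representative fibered over the segment $[-t_0,0)$ turns the integral into the Laplace transform of the periods via Fubini. Two points deserve comment. First, your justification of the negligibility of $\int_c\exp(\tau f)\,\omega$ for a chain $c\subset X^-$ is both shakier and weaker than needed: a general such $c$ is not fibered over the $t$-line in any useful way, so ``integration by parts in $t$'' and ``moderate growth of the fibre periods at $0$'' are not really available. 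The correct and much simpler observation is that $c$ is compact and $X^-$ is open, so $\Re(f)$ attains a maximum $-\delta<0$ on $c$; hence $|\exp(\tau f)|\leq\exp(-\delta\tau)$ on $c$ and the contribution is exponentially small, which is more than the $\approx$ in the statement requires.

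Second, the one genuine gap is the step you yourself flag: the construction of the fibered representative $\widetilde\Gamma$. To make the argument complete you must produce an honest compact singular chain swept out by the transported cycles $\gamma_s$ over the \emph{closed} segment $[-t_0,0]$, using the cone structure of $X_0$ and the finiteness of the multiplicity of the critical point; check that its boundary over $s=-t_0$ lies in $X^-$ while the degenerate end over $s=0$ (which sits on $f^{-1}(0)$, where $\Re(f)=0$, hence \emph{not} in $X^-$) has dimension $<n$ and contributes nothing; and then conclude $[\widetilde\Gamma]=[\Gamma]$ from $\partial_{-t_0}[\widetilde\Gamma]=\partial_{-t_0}[\Gamma]$ and the injectivity of $\partial_{-t_0}$. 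As written this is asserted rather than proved, and it is precisely where the geometry of the Milnor fibration enters. With that construction supplied (and the convergence of the $t$-integral near $t=0$, which you correctly reduce to the exponents of the periods being $>-1$), your proof is the same as the one the paper refers to.
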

	Note the similarity with \eqref{Eq:FibInt}. Once we have reached this point, it only rests to state the main Theorem of Malgrange in \cite{Mal74}. 
	\begin{theorem}\label{Thm:AG-ZV2}
		Let $\omega$ be a holomorphic differential $n$-form defined on $X$, and take $[\Gamma]\in H_{n}(X,X^-)$. Then the integral $\int_{\Gamma}\exp(\tau f)\cdot\omega$ can be expanded in an asymptotic series of the form
		\[\sum_{\alpha,k}S_{k,\alpha} \tau^\alpha\,(\ln \tau)^k,\quad\text{as } \tau\to\infty.\]
		Here the parameter $\alpha$ runs through a finite set of arithmetic progressions, depending only on the phase and consisting of negative rational numbers.
		Moreover, each number $\alpha$ verifies that $\exp (-2\pi\mathrm{i}\alpha)$ is an eigenvalue of the classical monodromy operator of the critical point of the phase. The coefficient $S_{k,\alpha}$ is zero when the monodromy does not have Jordan blocks of dimension greater than or equal to $k + 1$ associated with the eigenvalue $\exp (-2\pi\mathrm{i}\alpha)$. 
	\end{theorem}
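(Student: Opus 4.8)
The plan is to derive the statement directly from Theorem~\ref{Thm:AG-ZV1}, which already replaces the $n$-dimensional integral by the one-dimensional, Laplace-type integral
\[
J(\tau)=\int_{0}^{t_0}\exp(-\tau t)\,I(t)\,\mathrm{d}t,\qquad
I(t):=\int_{\partial_{-t}[\Gamma]}\omega_f(x,t),
\]
the equality holding up to a negative integer power of $\tau$ as $\tau\to\infty$, an error harmless for an asymptotic expansion. So the whole problem splits into two parts: (i) the behaviour of the period (Gel'fand-Leray) integral $I(t)$ as $t\to 0^{+}$, and (ii) the effect on that expansion of the transform $\int_0^{t_0}\exp(-\tau t)(\,\cdot\,)\,\mathrm{d}t$.

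For part (i) I would invoke the classical picture of the Gauss--Manin connection of the Milnor fibration \eqref{Eq:MilFib}. The class of the Gel'fand-Leray form, $t\mapsto[\omega_f(x,t)]\in H^{n-1}(X_t)$, is a holomorphic section of the (reduced) cohomology bundle over the punctured disk, while $t\mapsto\partial_{-t}[\Gamma]\in H_{n-1}(X_t)$ is a flat, in general multivalued, section of the homology bundle, and $I(t)$ is the pairing of the two. The Gauss--Manin connection has a regular singular point at $t=0$ (Brieskorn, Deligne, Malgrange), which forces moderate growth of flat sections near $0$; decomposing the flat family $\partial_{-t}[\Gamma]$ according to the Jordan blocks of the classical monodromy operator $T=T_sT_u$ ($T_s$ semisimple, $T_u$ unipotent), and allowing for a possible vanishing of $\omega$ along the critical fibre, one gets an expansion
\[
I(t)\ \sim\ \sum_{\beta,\,k}c_{\beta,k}\,t^{\beta}\,(\ln t)^{k}\qquad(t\to 0^{+}),
\]
where $\beta$ ranges over finitely many progressions $\beta_0+\ZZ_{\geq 0}$, the number $\exp(-2\pi\mathrm{i}\beta)$ is an eigenvalue of $T$, and the exponent $k$ of $\ln t$ is strictly less than the size of the largest Jordan block of $T$ attached to that eigenvalue. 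The rationality of the $\beta_0$ is exactly the Monodromy Theorem: the local monodromy of an isolated hypersurface singularity is quasi-unipotent, so its eigenvalues are roots of unity; this follows, for instance, from a resolution of $f^{-1}(0)$ together with A'Campo's formula for the zeta function of the monodromy, or from the arguments of Grothendieck and Landman.

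For part (ii) a computation in the spirit of Watson's lemma, based on $\int_0^{\infty}\exp(-\tau t)\,t^{\beta}\,\mathrm{d}t=\Gamma(\beta+1)\,\tau^{-\beta-1}$ for $\Re\beta>-1$ (and extended to the remaining $\beta$ by the integer shifts and by differentiation in $\beta$, which manufactures the $(\ln t)^{k}$ and $(\ln\tau)^{j}$ factors), shows that a term $t^{\beta}(\ln t)^{k}$ of $I(t)$ produces $\sum_{j\le k}(\mathrm{const})\,\tau^{-\beta-1}(\ln\tau)^{j}$, with the coefficient of $(\ln\tau)^{k}$ a nonzero multiple of $c_{\beta,k}$, while the truncation at $t_0$ contributes only an exponentially small, hence negligible, term. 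Putting $\alpha=-\beta-1$ and collecting, one obtains $\int_{\Gamma}\exp(\tau f)\cdot\omega\sim\sum_{\alpha,k}S_{k,\alpha}\,\tau^{\alpha}(\ln\tau)^{k}$: the $\alpha$'s form finitely many arithmetic progressions of negative rationals; since $\alpha\equiv-\beta\pmod{\ZZ}$ one has $\exp(-2\pi\mathrm{i}\alpha)=\exp(2\pi\mathrm{i}\beta)$, an eigenvalue of the monodromy (of $T^{-1}$, equivalently of $T$ for the loop traversed in the opposite sense, so that the statement matches after fixing an orientation); and $(\ln\tau)^{k}$ survives only when the term $(\ln t)^{k}$ is present in $I(t)$, i.e.\ only when $T$ has a Jordan block of dimension $\ge k+1$ for that eigenvalue — precisely the stated vanishing of $S_{k,\alpha}$.

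The real difficulty sits in part (i): producing the $t\to 0^{+}$ expansion of $I(t)$ with the precise control of the exponents and of the powers of $\ln t$. It leans on two substantial theorems, the regularity of the Gauss--Manin connection at an isolated singularity and the quasi-unipotence of the local monodromy. A more self-contained route would replace both by a resolution of singularities of $f^{-1}(0)$: pulling $\omega_f(x,t)$ back reduces the fibre integral to monomial model integrals whose $t$-expansions are elementary, the exponents $\beta$ are then read off the numerical data $(N_E,\nu_E)$, and the appearance of exactly the monodromy eigenvalues and Jordan sizes is recovered from A'Campo's formula and the combinatorics of the exceptional divisor. Everything outside (i) — Theorem~\ref{Thm:AG-ZV1} itself, the Laplace step, and the bookkeeping of the progressions — is routine.
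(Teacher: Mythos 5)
Your sketch is correct and follows exactly the route the paper sets up: the paper itself states Theorem~\ref{Thm:AG-ZV2} without proof, citing Malgrange \cite{Mal74} and the exposition of \cite[Chapter 11]{AG-ZV}, and your two-step argument --- Theorem~\ref{Thm:AG-ZV1} to reduce to a Laplace transform of the Gel'fand--Leray period $I(t)$, then the $t\to 0^{+}$ expansion of $I(t)$ via regularity of the Gauss--Manin connection and quasi-unipotence of the monodromy, followed by a Watson-lemma transfer --- is precisely that standard proof. The only delicate point is the sign convention matching the exponents $\beta$ of $I(t)$ with the monodromy eigenvalues $\exp(-2\pi\mathrm{i}\alpha)$, which you flag and resolve correctly.
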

	Malgrange also shows that integrals of type \eqref{Eq:CompxOI} allow one to recover some real oscillatory integrals, see e.g. \cite[Section~7]{Mal74} and \cite[Section~11.2]{AG-ZV}. If $f$ is a real analytic function and $\phi$ is supported on a small neighbourhood of the origin, then under some mild conditions on $\phi$, there exists an admissible $n$-chain $\Gamma$ for which
	\begin{equation}\label{Eq:ORIandOCI}
		\int_{\RR^{n}} \exp(\mathrm{i}\tau f)\,\phi\, \mathrm{d}x_1\wedge\cdots\wedge\mathrm{d}x_n\approx
		\int_{\Gamma}\exp(\mathrm{i}\tau f)\cdot \phi\, \mathrm{d}x_1\wedge\cdots\wedge\mathrm{d}x_n.
	\end{equation}
	From this observation follows the connection between the poles of Archimedean local zeta functions and eigenvalues of the complex monodromy of $f$ at some point of $f^{-1}(0)$.
	
	For some other works related with complex oscillatory integrals or with the ideas of Malgrange in \cite{Mal74} see for example \cite{WonMcC,Bar82,Bar84,Loe85,Pal,Bar86,Loe87,AG-ZV,BarMai89,DenRepo,BarMai93,LioRol,Jac00A,LiaPar,DelHow,Bar03,BarJed,Ang,Del,Web,Col,Pem10A,Pem10B,BraSeb,ClCoMi,Sab18,AndPet}, and the references therein.

	\section{Some generalizations}\label{Sec:SomeGen}
	In this last section we want to mention some generalizations of local zeta functions and oscillatory integrals. 
	\subsection{Local Zeta Functions}\label{Sec:LZFGen}
	In 1987 C. Sabbah introduces in \cite{Sab87B} the following generalization of $Z_{\phi}(s,f)$. Let $f_1,\ldots,f_l$ be a set of analytic functions defined over a complex (smooth) variety $X$, and let $\phi$ be a smooth function with compact support
	contained in $X$. Then the \textit{multivariate local zeta function associated to} $\phi$ and $f= (f_1,\ldots,f_l)$ is defined as
	\[Z_{\phi}(s_1,\ldots,s_l,f)=\int_{X}\phi(x)\ |f_1(x)|^{s_1}\cdots |f_l(x)|^{s_l}\ \mathrm{d}x,\]
	where $s_i\in\mathbb{C}$ with $\Re(s_i)>0$, for $i=1,\ldots,l$. 
	
	This integral converges on the subspace of $\CC^l$ defined by $\Re(s_i)>0$, for $i=1,\ldots,l$. Moreover, the author shows that there is a meromorphic continuation to the whole $\CC^l$ with poles contained in the union of some hyperplanes described in terms of some Bernstein polynomial. He shows that there is a finite set 
	$\mathcal{L}$ of linear forms with positive integer coefficients, relatively prime, such that for $i=1,\ldots,l$ the following functional equations holds:
	\[\Bigg[\prod_{L\in\mathcal{L}}b_{L,i}(L(s))\Bigg]\cdot f^s=P_k(s,x,\partial_x)f^s\cdot f_k.\]
	Here the $b_{L,i}$ denote certain Bernstein polynomials defined previously by the author in \cite{Sab87A}. The $P_k(s,x,\partial_x)$ are certain differential operators analogous to the ones defined in Section \ref{Sec:BerPol}. The set $\mathcal{L}$ is called the \textit{set of slopes} and in principle depends on the choice of some filtration of the $\mathcal{D}$-module associated with $f$.
	
	Apparently at the same time, F. Loeser defines in \cite{Loe89} similar zeta functions. He works over a local field $\KK$ of characteristic zero. In this case $f_1,\ldots,f_l$ are polynomials vanishing at the origin when $\KK$ is non-Archimedean, and analytic functions in the Archimedean case. In the $p$-adic case the set of test functions is given by locally constant functions with compact support, while in the Archimedean case it is the set of smooth functions with compact support. 
	The \textit{multivariate local zeta function} associated to a test function $\phi$ and $F= (f_1,\ldots,f_l)$ is defined as
	\begin{equation}\label{Eq:MultivarZeta}
		Z_{\phi}(s_1,\ldots,s_l,f)=\int_{\KK^n}\phi(x)\ |f_1(x)|_\KK^{s_1}\cdots |f_l(x)|_\KK^{s_l}\ |\mathrm{d}x|,
	\end{equation}
	where $s_i\in\mathbb{C}$ with $\Re(s_i)>0$, for $i=1,\ldots,l$. Actually, his definition is a little more general, since it includes the quasicharacters of $\KK$. He uses Hironaka's theorem to show that the poles of the meromorphic continuation to $\CC^l$ (as a rational function in the $p$-adic case) are contained in the union of some hyperplanes described this time in terms of the resolution of $\cup_{i=1}^lf_i^{-1}(0)$. He also shows that the set of slopes is connected with the geometry of the morphism $F$. More precisely, if one imposes some finiteness condition on $F$, the set of slopes is contained in the set of normal vectors to the faces of the Newton polyhedron of the discriminant of $F$ at the origin. 
	
	Later on, B. Lichtin begins in \cite{LichCompoI} a systematic study of (twisted) multivariate zeta functions essentially of the form \eqref{Eq:MultivarZeta}. He was motivated by Igusa's question about the behaviour of the singular series determined by polynomial mappings $P=(P_1,\ldots,P_l)\,:\KK^n \to \KK^l$, $(l\leq n)$. He found out that this behaviour depends crucially on the geometric features of the singular locus of $P$. In the $p$-adic case his results give also non trivial uniform decay estimates for generalized exponential sums attached to $P$, \cite{LichCompoI,LichBonn,LichCompoII}. In the case $\KK=\RR$ he also found in \cite{LichCrelle} non trivial uniform bounds for some oscillatory integrals of the form 
	\[
	\int\limits_{\RR^n} \exp(\mathrm{i}\{\tau_1 P_1(x)+\cdots+\tau_l P_l(x)\}) \,\phi(x) \, |\mathrm{d}x|,
	\]
	as $|(\tau_1,\ldots,\tau_l)|\to\infty$. For other applications of these techniques see \cite{LichForum,LichCompoIII}.
	
	 These multivariate zeta functions have been used recently in mathematical physics to study string amplitudes in \cite{B-GVZ-G} and Log-Coulomb Gases in \cite{Z-GZ-LL-C}. See the references given there for other uses in $p$-adic mathematical physics.
	
	Another local zeta function involving several functions was studied in a joint work with W. Veys and W. A.  Z\'{u}\~{n}iga-Galindo \cite{Le-Ve-Zu}, were we considered Archimedean zeta functions for analytic mappings. If  $\KK=\RR$ or $\CC$, let $\boldsymbol{f}=$ $\left(f_{1},\ldots,f_{l}\right)  :U\rightarrow \KK^{l}$ be a $\KK$-\textit{analytic mapping} defined on an open $U$ in $\KK^{n}$. Let $\phi:U\rightarrow\mathbb{C}$ be a
	smooth function on $U$ with compact support. Then the local zeta function
	attached to $\left(  \boldsymbol{f},\Phi\right)  $ is defined as
	\[
	Z_{\phi}(s,\boldsymbol{f})=\int\limits_{\KK^{n}\setminus\boldsymbol{f}%
		^{-1}\left(  0\right)  }\phi\left(  x\right)\,  \left\vert \boldsymbol{f}%
	(x)\right\vert _{\KK}^{s}\ |\mathrm{d}x|,\quad \text{ for } s\in\mathbb{C}\text{ with } \Re(s)>0.
	\]
	Here $|\boldsymbol{f}(x)|_{\KK}=|(f_{1}(x),\ldots,f_{l}(x)|_{\KK}$ stands for $\sqrt{\sum f_{i}^{2}(x)}$ or  $\sqrt{\sum |f_{i}(x)|_\CC^{2}}$ depending on whether $\KK$
	is $\RR$ or $\CC$,  and $|\mathrm{d}x|$ is the Haar measure on $\KK^{n}$. In this case we give a description of the possible poles of $Z_{\phi}(s,\boldsymbol{f})$ in terms of a log-principalization of the ideal $\mathcal{I}_{\boldsymbol{f}}=\langle f_{1},\ldots,f_{l}\rangle$. We also generalize some of the results of Varchenko presented in Section \ref{Sec:RealOI}, by introducing a new non-degeneracy condition associated to the Newton polyhedron of $\boldsymbol{f}$ (which in general differs from the Minkowski sum of the $\mathcal{NP}(f_i)$). 
	
	Very recently W. Veys and W. A.  Z\'{u}\~{n}iga-Galindo extend in \cite{Ve-Zu} part of the classical theory of local zeta functions and oscillatory integrals to the case of meromorphic functions defined over a local field $\KK$ of characteristic zero. Let $f, g\ :U\to\KK$ be two $\KK$-analytic functions defined on an open set  $U\subseteq \KK^n$, such that $f/g$ is not constant. The \textit{local zeta function attached to a test function} $\phi$ \textit{and} $f/g$ is defined as
	\[Z_{\phi}(s,f/g)=\int_{U\setminus D_\KK}\phi(x)\ \left\vert \frac{f(x)}{g(x)}\right\vert _{\KK}^{s}\ |\mathrm{d}x|,
	\]
	where $s$ is a complex number and $D_\KK=f^{-1}(0)\cup g^{-1}(0)$. In this case a resolution of singularities of $D_\KK$ is required even to show that $Z_{\phi}(s,f/g)$ converges in some open strip $(\alpha,\beta)$, but it turns out that $\alpha$ and $\beta$ are independent of the chosen resolution. The authors also consider oscillatory integrals, with the notation of Definition \ref{Def:OscInt}, they have the form 
	\[\int\limits_{\KK^{n}\setminus D_\KK}\phi(x)\ \varepsilon\Bigg(\tau\cdot \frac{f(x)}{g(x)}\Bigg)\ |\mathrm{d}x|.
	\]
	They show that the analogue of Theorem \ref{Thm:AsyExp} implies two different asymptotic expansions: one when the norm of the parameter tends to infinity, and another one when the norm of the parameter tends to zero. The first asymptotic expansion is controlled by some poles of the local zeta functions associated to the meromorphic functions $f/g-c$, for certain special values $c$, while the second expansion is controlled by other poles of $Z_{\phi}(s,f/g)$.
	\subsection{Oscillatory Integrals}
	Some generalizations of oscillatory integrals have been considered under the name of  multilinear oscillatory operators, see e.g. \cite{CheLu,PhStSt01,LuTao,LuWu,ChLiTaTh,Gre08,GiGrXi,NiOnZe} and the references given there. We discuss just a couple of examples to see the kind of problems of interest in the area.
	
	In \cite{PhStSt01} Phong, Stein, and Sturm investigated the multilinear oscillatory integral operator
	\[
	T_D(\tau;g_1,\ldots,g_n)=\int\limits_{D} \exp(\mathrm{i}\tau\cdot f(x_1,\ldots,x_n))\,g_1(x_1)\cdots g_n(x_n)\, |\mathrm{d}x_1\wedge\cdots\wedge\mathrm{d}x_n|,
	\]
	where $\tau$ is a real parameter, the phase function $f$ is a polynomial and $D$ is a subset of the unit ball in $\RR^n$. In addition, the functions $g_i$ belong to the 
	functional space $\mathbf{L}^{p_i}[0,1]$. One of their main results is the following \textit{decay estimate} for $n\geq 2$,
	\[|T_D(\tau;g_1,\ldots,g_n)|\leq C\, |\tau|^{-1/\alpha}\,[\ln (2+|\tau|)]^{n-1/2}\,\prod_{i=1}^{n}||g_i||_{p_i},
	\]
	where $\alpha$ is some constant depending on a certain Newton polyhedron of $f$ and $C$ is a constant that does not depend on $\tau$.
	
	A more general multilinear oscillatory integral operator was proposed by Christ, Li, Tao and Thiele in \cite{ChLiTaTh} by 
	\[
	I(\tau;g_1,\ldots,g_m)=\int\limits_{\RR^{n}} \exp(\mathrm{i}\tau\cdot f(x))\,\prod_{j=1}^{m}g_j(\pi_j(x))\,\phi(x)\,|\mathrm{d}x|.
	\]
	Here $\tau$ is a real parameter, $\phi$ is a smooth function with compact support and $f\,:\,\RR^n\rightarrow \RR$ is a measurable function. Moreover, each $\pi_j$ is a projection from $\RR^n$ to some $\RR^{k_j}$, with $1\leq k_j\leq n-1$, and the functions $g_j :\,\RR^{k_j}\rightarrow \CC$ are locally integrable with respect to the measure of $\RR^{k_j}$. 
	
	The integral $I(\tau;g_1,\ldots,g_m)$ is well defined if all the $g_j$ belong to the functional space $\mathbf{L}^\infty$, and in this case 
	\[|I(\tau;g_1,\ldots,g_m)|\leq C\,\prod_j||g_j||_{\infty}.\]
	The authors show that if $f$ is a polynomial of bounded degree and the $k_j$'s are $n-1$ or $1$ according to some conditions, then it is possible to characterize the decay rates of $I(\tau;g_1,\ldots,g_m)$. Some other cases are considered in \cite{NiOnZe} and \cite{GiGrXi}, but the general case is largely open. 
	
	Finally we want to comment that to the best of our knowledge, there are no well established connections between these multilinear oscillatory integrals and local zeta functions. In seems also that there are no `multilinear' analogues of the complex oscillatory integrals of section \ref{Sec:OscInt}. To establish such connections/analogues may be of some interest for analysts or algebraic geometers. 
	
	\section*{Acknowledgements}
	The author wishes to express his gratitude to the referee for several helpful comments and suggestions. The author also wishes to thank to Professor B. Lichtin for drawing the author’s attention to his work on multivariate local zeta functions and oscillatory integrals \cite{LichCompoI,LichBonn,LichCompoII,LichCompoIII,LichCrelle,LichForum}.
	\bibliographystyle{amsplain}
\providecommand{\bysame}{\leavevmode\hbox to3em{\hrulefill}\thinspace}
\providecommand{\MR}{\relax\ifhmode\unskip\space\fi MR }
\providecommand{\MRhref}[2]{%
	\href{http://www.ams.org/mathscinet-getitem?mr=#1}{#2}
}
\providecommand{\href}[2]{#2}

\end{document}